
\documentclass[12pt,english]{amsart}


\usepackage[paperwidth=210mm,paperheight=297mm,inner=3cm,outer=3cm,top=2.5cm,bottom=2.6cm]{geometry}

\linespread{1.1}


\usepackage[utf8]{inputenc} 
\usepackage[T1]{fontenc}
\usepackage{amssymb}
\usepackage[leqno]{amsmath}
\usepackage[pdftex]{graphicx}    
\usepackage{amsthm}
\usepackage{babel}
\usepackage{hyperref}
\usepackage{amsrefs}
\usepackage{enumitem}
\setlist[enumerate]{itemsep=0pt}
\setlist[itemize]{itemsep=0pt}
\setlist[enumerate,1]{label=(\arabic*)}


\newcommand\R{\mathbb{R}}

\newcommand\real{{\rm Re}}
\newcommand\imag{{\rm Im}}

\def \x{{\mathbf{x}}}

\def \I{{\mathcal I}}
\def \V{{\mathcal V}}

\def\bmatrix#1{\left[ \begin{matrix} #1 \end{matrix} \right]}


\theoremstyle{plain}
\newtheorem{Thm}{Theorem}[section]
\newtheorem{Prop}[Thm]{Proposition}
\newtheorem{Cor}[Thm]{Corollary}
\newtheorem{Lemma}[Thm]{Lemma}

\newtheorem{Conjecture}[Thm]{Conjecture}

\newtheorem*{Thm*}{Theorem}
\newtheorem*{Prop*}{Proposition}
\newtheorem*{Cor*}{Corollary}
\newtheorem*{Lemma*}{Lemma}
\newtheorem*{Sublemma*}{Sublemma}
\newtheorem*{Conjecture*}{Conjecture}

\theoremstyle{definition}

\newtheorem{Defs}[Thm]{Definitions}
\newtheorem{Example}[Thm]{Example}

\newtheorem{Remark}[Thm]{Remark}

\newtheorem*{Def*}{Definition}
\newtheorem*{Defs*}{Definitions}
\newtheorem*{Example*}{Example}
\newtheorem*{Examples*}{Examples}
\newtheorem*{LemmaDef*}{Lemma and Definition}
\newtheorem*{Notation*}{Notation}
\newtheorem*{Problem*}{Problem}
\newtheorem*{Question*}{Question}
\newtheorem*{Remark*}{Remark}


\begin{document}
\title[Testing hyperbolicity]{Testing hyperbolicity of real polynomials}

\author{Papri Dey}
\author{Daniel Plaumann}

\maketitle

\begin{abstract}
Hyperbolic polynomials are real multivariate polynomials with only
real roots along a fixed pencil of lines. Testing whether a given
polynomial is hyperbolic is a difficult task in general. We
examine different ways of translating hyperbolicity into nonnegativity
conditions, which can then be tested via sum-of-squares relaxations.
\end{abstract}

\section*{Introduction}

A real form (i.e.~homogeneous polynomial) $F$ in $n$ variables
$x_1,\dots,x_n$ is called \emph{hyperbolic} with respect to a point
$e\in\R^n$ if $F(e)\neq 0$ and $F(te-a)$ has only real roots in
$t$ for all $a\in\R^n$. The simplest example is the determinant of
symmetric matrices, which is hyperbolic with respect to the identity
matrix, and it can be useful to think of hyperbolic polynomials as
generalizations of this determinant.

Hyperbolic polynomials originate in the theory of partial differential
equations (see for example \cite{Hormander}) but have more recently received a lot of attention
in optimization (hyperbolic programming, spectrahedra) and real
algebraic geometry (determinantal representations). They are also
closely related to real-stable polynomials which have become important
in combinatorics and theoretical computer science. Indeed, if $F$ is
irreducible of degree at least $2$, then stability is equivalent to
hyperbolicity with respect to all unit vectors $e_1,\dots,e_n$.

Testing whether a given polynomial is hyperbolic (with respect to a
fixed point $e$) is a computationally difficult task as soon as
$n\ge 3$, even though the precise complexity is only known in some
special cases (see \cite{Nikhil}). In this note, we will look at three different
approaches, all of which work by translating hyperbolicity into a
condition of nonnegativity. Positive (resp.~nonnegative) polynomials
are a staple of real algebraic geometry and, while testing
nonnegativity is at least equally hard in general, there are several
well established relaxation techniques, in particular based on sums of squares.

For $n=3$, hyperbolicity is equivalent to the existence of a definite
hermitian or real symmetric determinantal representation (by a
celebrated result due to Helton and Vinnikov \cite{HV07}). For $n\ge
4$, this is no longer true (see for example \cite{NT12}). In any case,
the problem of computing determinantal representations is interesting
in its own right, but we do not consider it here (see \cite{Hen10},
\cite{PL17}, \cite{PSV12}, \cite{papriquadratic}, \cite{papribiv},
\cite{paprimulti},  \cite{Vinnikov}, \cite{Vinnikov2}).

The first method we describe is a direct translation of the classical
real root counting result due to Hermite. This is the
simplest approach. It is certainly
well known and we keep the discussion very brief. Our second method
looks at the intersection of the real and imaginary part of a
polynomial, which can be viewed as parametrized curves in the
plane. We use resultants to describe this intersection. This resultant
factors, and we show that nonnegativity of the nonntrivial factor
characterizes hyperbolicity (Thm.~\ref{rzres}); one can also apply a
real Nullstellensatz certificate to the real and imaginary parts
directly.  Our third method relies on the fact that the set of
hyperbolic polynomials is known to be connected (even simply
connected). One can explicitly trace a path from any given polynomial
to a fixed hyperbolic polynomial and characterize hyperbolicity by
evaluating a (univariate) discriminant along that path
(Thm.~\ref{Thm:NpathHyp}).

We illustrate our methods by a number of examples. The different
translations between hyperbolicity and nonnegativity are interesting
to us in themselves. From a more practical point of view, the appeal
comes mostly from the fact that sum-of-squares relaxations are already
implemented in a number of software packages and therefore readily
available. We have not written general code for our
methods that would allow for meaningful runtime-comparisons. Rather, our results
should be seen as proof-of-concept. However, the size of the semidefinite programs involved
grows rapidly with each method. The examples we have suggest that
while the Hermite method is the most straightforward (and possibly the
best in general), the intersection method should perform better in
certain cases (like curves of low degree). The discriminant method will
in general lead to larger relaxations.

Finally, it should also be pointed out that we always test hyperbolicity of a
polynomial with respect to a fixed point $e$. This seems to be the
most important case, since the point $e$ is often in some way
distinguished. However, one might also ask how to test for
hyperbolicity with respect to \textit{any} point. Apart from
completely unspecific approaches (like quantifier elimination), it
seems entirely unclear how to test this at all and it could be an
interesting future problem, even in special cases.

\textit{Acknowledgements.} We would like to thank Amir Ali Ahmadi,
Diego Cifuentes and especially Elias Tsigaridas for helpful discussions on the
subject of this paper. Much of the work on this paper has been
supported by the National Science Foundation under Grant
No. DMS-1439786 while both authors were in residence at the Institute
for Computational and Experimental Research in Mathematics in
Providence, RI, during the Fall 2018 \textit{Nonlinear Algebra}
program. The first author also gratefully acknowledges support through the
\textit{Max Planck Institute for Mathematics in the Sciences} in
Leipzig.

\section{Preliminaries}\label{sec:Hyperbolicity}

\begin{Defs}
	A polynomial in one variable with real coefficients is called \emph{real rooted} if all its complex roots are real. Fix a point $e\in\R^{n+1}$. A form (i.e.~a homogeneous polynomial) $F\in\R[x_0,x_1,\dots,x_n]$ is called  \emph{hyperbolic with respect to $e$} if $F(e)\neq 0$ and the univariate polynomial $F(te-a)\in\R[t]$
	is real rooted for all $a\in\R^{n+1}$. It is called
        \emph{strictly hyperbolic} if the roots of $F(te-a)\in\R[t]$
        are real and distinct for all $a\in\R^{n+1}$, $a\neq 0$.
\end{Defs}

\begin{Example}
  A cubic form $F\in\R[x_0,x_1,x_2]$ in Weierstra\ss{} normal form
  \[
    F(x_0,x_1,x_2)\ = \ x_0x_2^2-H(x_0,x_1)
  \]
  with $H\in\R[x_0,x_1]$ homogeneous of degree $3$ is hyperbolic (with
  respect to some point $e=(1,r,0)$) if and only if the bivariate form
  $H$ factors into three real linear forms. It is strictly hyperbolic
  if and only if these factors are distinct. In this case, the cubic
  curve defined by $F$ in the real projective plane has two connected
  components, while if $H$ contains an irreducible quadratic factor,
  it has only one connected component.
  
   Consider the family of cubic forms
  \[
    F_c(x_0,x_1,x_2)\ =\ x_0x_2^2-\left(x_1-\frac 1c x_0\right)\bigl(x_1^2- cx_0^2\bigr)
  \]
  in one parameter $c\in\R\setminus\{0\}$.
  It is hyperbolic with respect to $(1,0,0)$ for $c>0$ and not
  hyperbolic (with respect to any point) for $c<0$. For $c=1$, it is
  hyperbolic but not strictly hyperbolic.
  \begin{center}
    \begin{minipage}{0.3\linewidth}
      \begin{center}
        \includegraphics[width=5cm]{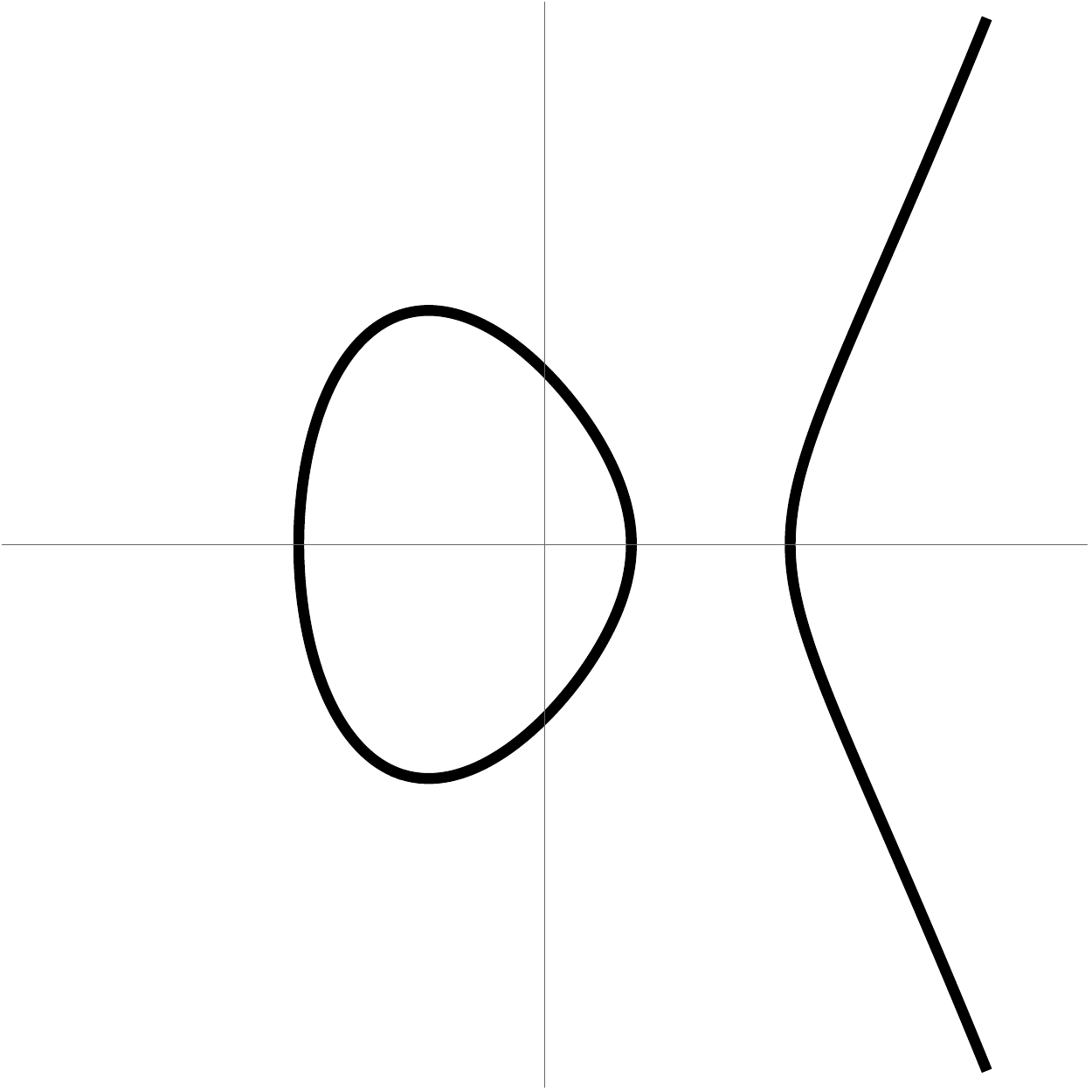}
      \end{center}
    \end{minipage}
    \begin{minipage}{0.3\linewidth}
      \begin{center}
        \includegraphics[width=5cm]{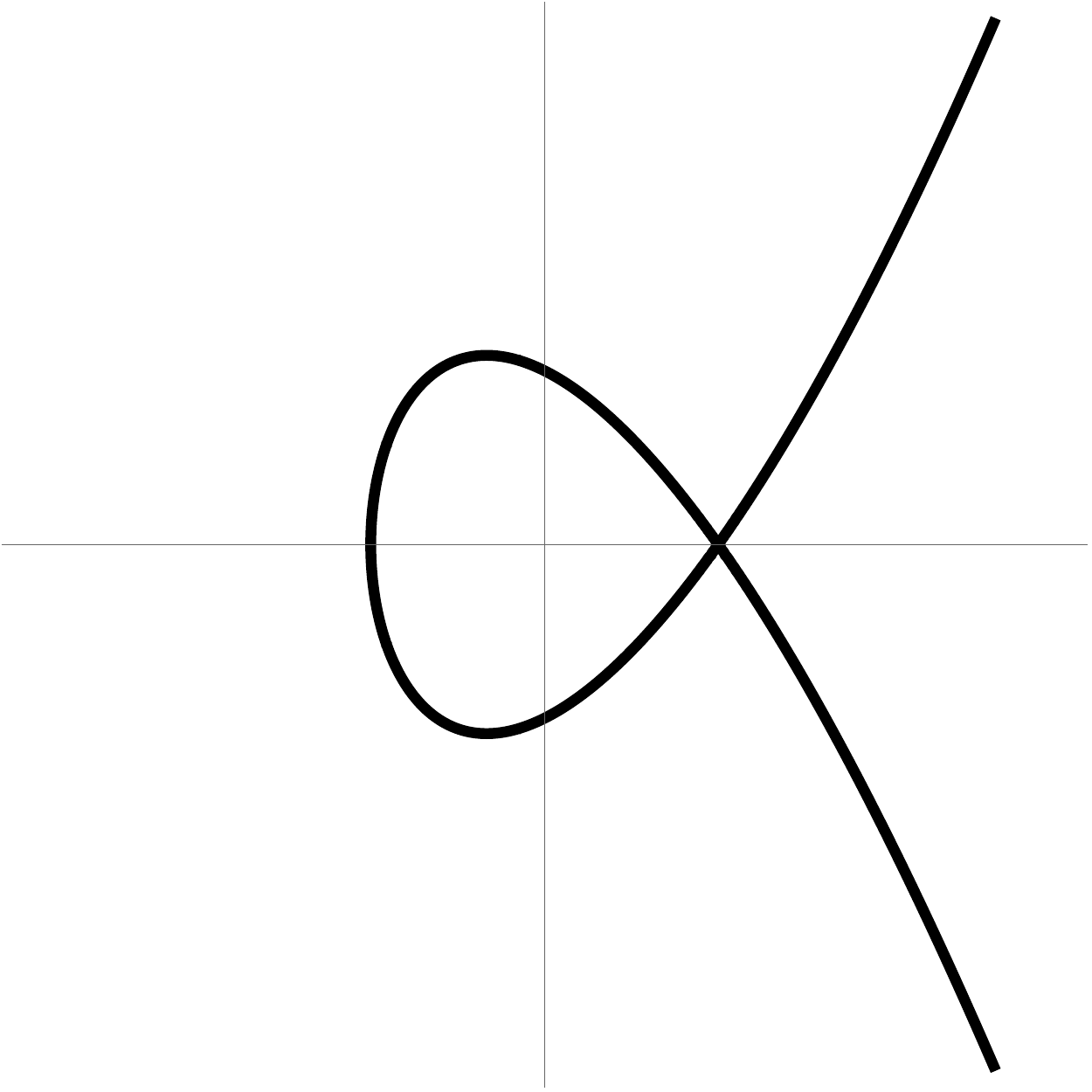}
      \end{center}
    \end{minipage}
    \begin{minipage}{0.3\linewidth}
      \begin{center}
        \includegraphics[width=5cm]{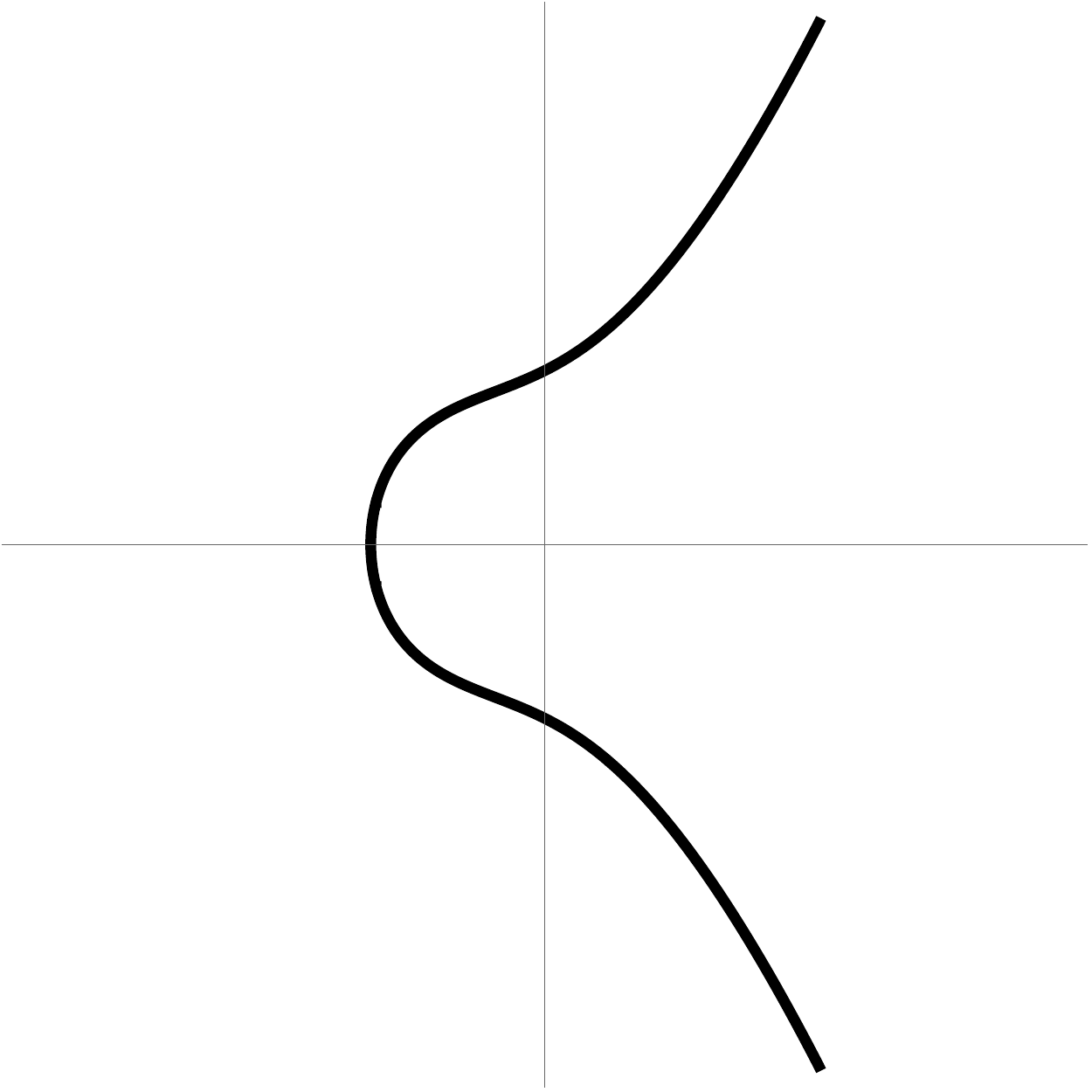}
      \end{center}
    \end{minipage}
    
    \begin{minipage}{0.7\linewidth}
      \begin{center}
        The curve defined by $F_c$ for $x_0=1$ in the
        $(x_1,x_2)$-plane for $c=2$, $c=1$ and $c=-1$.
      \end{center}
    \end{minipage}
  \end{center}
\end{Example}

In the definition of hyperbolicity, it is equivalent to ask that $F(e-ta)$ should be real rooted for all $a\in\R^{n+1}$, since $F$ is homogeneous. Moreover, it is sufficient to test real-rootedness for vectors $a\in\R^{n+1}$ orthogonal to $e$. In particular, for $e=(1,0,\dots,0)$, a form $F$ with $F(e)\neq 0$ is hyperbolic with respect to $e$ if and only if the dehomogenization $f=F(1,x_1,\dots,x_n)$ has the property that the univariate polynomial $f(ta)\in\R[t]$ is real rooted for all $a\in\R^n$. Such polynomials are called \emph{real zero polynomials}. This inhomogeneous setup is preferred in several applications.

An important variant of the definition of hyperbolicity is the following: A form $F\in\R[x_0,\dots,x_n]$ is called \emph{real stable} if it is hyperbolic with respect to every point in the positive orthant $\R^{n+1}_+$.

\section{The Hermite method}\label{sec:Hermite}

Methods to determine the number of real roots of real univariate
polynomials go back to Sturm and Hermite in the nineteenth
century. Given a monic polynomial $f\in\R[t]$ of degree $d$, the
\textit{Hermite matrix} of $f$ is the real symmetric $d\times d$-matrix
\[
  H(f)=
  \begin{pmatrix}
    N_0(f)     & N_1(f) & \cdots & N_{d-1}(f)\\
    N_1(f)     & \ddots &        & N_d(f)\\
    \vdots     &        & \ddots & \vdots\\
    N_{d-1}(f) & N_d(f) & \cdots & N_{2d-2}(f)
  \end{pmatrix}
  \]
  where $N_j(f)$ denotes the $j$-th power-sum of the complex zeros of
  $f$, which can be expressed in the coefficients of $f$ via the
  classical Newton identities. The number of distinct roots is
  given by the rank of $H(f)$ and the number of real roots by the
  signature. In particular, $f$ is real-rooted if and only
  if $H(f)$ is a positive semidefinite matrix (see \cite{Krein} for
  all of this and
  an excellent survey).

Fix $e=(1,0,\dots,0)\in\R^{n+1}$. Given a form $F[x_0,\dots,x_n]$ of
degree $d$ with $F(e)=1$, we can write down the Hermite matrix
$H_{x_0}(F)$ with respect to the variable $x_0$, whose entries are
polynomials in $x_1,\dots,x_n$. Then $F$ is hyperbolic with respect to
$e$ if and only if $H_{x_0}(F)$ is positive semidefinite for all
$a\in\R^n$. Equivalently, the \textit{Hermite form}
\[
  \mathcal{H}(F)=u^TH_{x_0}(F)u,\quad u^T=(u_1,\dots,u_d)
\]
in variables $x_1,\dots,x_n,u_1,\dots,u_d$, which is quadratic in $u$,
is a nonnegative polynomial on $\R^{n+d}$ if and only if $F$ is
hyperbolic with respect to $e$.

Nonnegativity of $\mathcal{H}(F)$ can be relaxed to a sum-of-squares
certificate. The Hermite form $\mathcal{H}(F)$ is a sum of squares in
$\R[x,u]$ if and only if the matrix $H_{x_0}(F)$ can be factored into
\[
  H_{x_0}(F)=V^TV
\]
where $V$ is a matrix with entries in $\R[x_1,\dots,x_n]$ of some
format $d\times r$. For $n\le 2$, this
relaxation is exact, but not for $n\ge 3$ (see \cite{Gondard} or
\cite{NPT13}).

\begin{Example}
  For the cubic
  \[
    F=x_0^3-\frac{x_0^2 x_1}{2}-x_0 x_1^2-\frac{x_0 x_2^2}{2}+\frac{x_1^3}{2}
  \]
 the Hermite form is given by
\begin{align*}
    \mathcal{H}(F)=&3u_{1}^2+u_{1}u_{2}x_1+\frac{9}{4}u_{2}^2x_1^2+\frac
                     92u_{1}u_{3}x_1^2+\frac 14u_{2}u_{3}x_1^3+\frac{33}{16}u_{3}^2x_1^4+
        u_{2}^2x_2^2+2u_{1}u_{3}x_2^2\\&+\frac 32u_{2}u_{3}x_1x_2^2+\frac
    52u_{3}^2x_1^2x_2^2+\frac 12u_{3}^2x_2^4
  \end{align*}
  Since $F$ is hyperbolic, this should be a sum of squares in
  $x_1,x_2,u_{1},u_{2},u_{3}$. Indeed, we computed
  \begin{align*}
    \mathcal{H}(F)&=3\left(\frac{3}{4}x_1^{2}u_{3} +\frac{1}{3}x_2^{2}u_{3}+\frac{1}{6}x_1u_{2}+u_{1}\right)^{2}+\frac{13}{6}\left(-\frac{3}{26}x_1^{2}u_{3}+ \frac{1}{26}x_2^{2}u_{3}+x_1u_{2}\right)^{2}\\
     &+\left(x_1x_2u_{3} + \frac{1}{2}x_2u_{2}\right)^{2}+\frac{3}{4}x_2^{2}u_{2}^{2}+\frac{9}{26}\left(x_1^{2}u_{3}+\frac{1}{36}x_2^{2}u_{3}\right)^{2}+\frac{47}{288}x_2^{4}u_{3}^{2}.
  \end{align*}
\end{Example}

\section{The intersection method}\label{sec:Intersection}
In this section we characterize hyperbolicity of multivariate
polynomials by separating into real and imaginary part.  For the remainder
of this section, we will use the following notation: We fix
$e=(1,0,\dots,0)\in\R^{n+1}$. Given a form $F\in\R[x_0,\dots,x_n]$, we
put
\[
f_\x(t) = F(te-\x)=F(t,\x)\text{ where }\x=(x_1,\dots,x_n)\\
\]
Recall that $F$ is hyperbolic with respect to $e$ if and only if $f_a(t)$ is real rooted for all $a\in\R^n$. For any $a\in\R^n$, the real and imaginary parts of $f_a(t)$ are polynomials in $a$, i.e.~we can write
\[
f_\x(t)=f_\real(t_1,t_2,\x)+if_\imag(t_1,t_2,\x)
\]
where $t=t_1+it_2$.

\begin{Lemma} \label{nort}
A form $F \in \R[x_0,\dots,x_n]$ with $F(e)\neq 0$ is hyperbolic with respect to $e$ if and only if the two polynomials $f_\real,f_\imag\in\R[t_1,t_2,\x]$ have no common real zero $(s_1,s_2,a)\in\R^{n+2}$ with $s_2\neq 0$.
\end{Lemma}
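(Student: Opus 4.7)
The lemma is essentially a direct translation of the definition of real-rootedness into the language of real and imaginary parts, so I expect a short and direct argument with no serious obstacle.

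First I would use the hypothesis $F(e)\neq 0$ to observe that $f_\x(t)=F(t,\x)$ has leading coefficient $F(e)\neq 0$ when viewed as a polynomial in $t$. Consequently, for every $a\in\R^n$ the univariate polynomial $f_a(t)\in\R[t]$ has degree exactly $d=\deg F$, with no degree drop, and real-rootedness of $f_a$ is therefore equivalent to the absence of any complex root with nonzero imaginary part.

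Next I would substitute $t=s_1+is_2$ with $s_1,s_2\in\R$ into $f_a(t)$. By the very definition of $f_\real$ and $f_\imag$ as the real and imaginary parts of $f_\x(t_1+it_2)$, one has
\[
  f_a(s_1+is_2)\ =\ f_\real(s_1,s_2,a)+if_\imag(s_1,s_2,a),
\]
and both summands on the right are real. Hence $s_1+is_2$ is a complex root of $f_a$ if and only if $(s_1,s_2,a)\in\R^{n+2}$ is a common real zero of $f_\real$ and $f_\imag$, and such a root is non-real precisely when $s_2\neq 0$.

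Finally I would assemble the two observations: $F$ is hyperbolic with respect to $e$ if and only if $f_a(t)$ has no non-real complex root for any $a\in\R^n$, which by the previous paragraph is equivalent to the nonexistence of a triple $(s_1,s_2,a)\in\R^{n+2}$ with $s_2\neq 0$ and $f_\real(s_1,s_2,a)=f_\imag(s_1,s_2,a)=0$. Both implications follow simultaneously. The lemma is a reformulation of the definition rather than a substantive result, and the only care required is in handling the substitution $t=s_1+is_2$ and in recognizing that the side condition $s_2\neq 0$ is exactly what singles out non-real roots.
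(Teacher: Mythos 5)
Your proof is correct and takes essentially the same approach as the paper, whose entire proof is the one-line remark that the lemma simply restates real-rootedness of $f_a(t)$ for all $a\in\R^n$. Your added observation that $F(e)\neq 0$ keeps the leading coefficient of $f_a$ nonzero (so $f_a$ never vanishes identically) is a sensible explicit touch but does not change the substance.
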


\begin{proof}
This is simply restating that $f_a(t)$ must be real rooted for all $a\in\R^n$.
\end{proof}

We can express the condition in the Lemma using resultants: Recall
that two non-zero polynomials $g,h\in\R[t]$ have a
commmon factor in $\R[t]$ if and only if the
resultant ${\rm Res}(g,h)=0$ in $\R$. The resultant is a polynomial in
the coefficients of $g$ and $h$. Now if we let $f=\sum_{j=0}^d
c_jt^j$ with variable coefficients and
write $t=t_1+it_2$, $f(t_1,t_2)=f_\real(t_1,t_2)+if_\imag(t_1,t_2)$,
then $f_\real$ has degree $d$ in $t_1$ while $f_\imag$ has degree
$d-1$. If $f_\real=\sum_{j=0}^d a_jt_1^j$, and
$f_\imag=\sum_{j=0}^{d-1} b_jt_1^j$ with $a_j,b_j\in\R[t_2]$, the resultant is given by the determinant of the $(2d-1)\times (2d-1)$-Sylvester matrix
\[
  {\rm Res}(f_\real,f_\imag)=
  \det
  \begin{pmatrix}
    a_0     &         &        & b_0\\
    \vdots  & \ddots  &        & \vdots & \ddots\\
    \vdots  &         & a_0    & \vdots &       & b_0\\
    a_d     &         & \vdots & b_{d-1}&       &\vdots\\
            & \ddots  & \vdots &        & \ddots&\vdots\\
            &         & a_d    &        &       & b_{d-1}
  \end{pmatrix}
\]
Since $a_d=c_d$ and $b_{d-1}=dc_dt_2$, and since $t_2$ divides the last $d$
columns of the Sylvester matrix, we have
\[
  t_2^d|{\rm Res}(f_\real,f_\imag)\quad\text{and}\quad c_d|{\rm Res}(f_\real,f_\imag).
\]
\begin{Remark}\label{Remark:FactorizationResultant}
  In fact, no higher power of $t_2$ and $c_d$ divides
  ${\rm Res}(f_\real,f_\imag)$, but it seems rather complicated to
  give a direct proof of this fact. (For example, for
  $f=c_dt^d+t^{d-1}$, one can compute
  ${\rm Res}(f_\real,f_\imag)=r\cdot c_d\cdot
  t_2^{d^2-2d+2}(1+4t_2^2c_d^2)^{d-1}$, where $r$ is a (large)
  integer, which shows that ${\rm Res}(f_\real,f_\imag)$ cannot in
  general be divisible by $c_d^2$. Similary, one can examine for
  example $f=t^d+1$ to show that $t_2$ does not in general occur to a
  higher power than $d$.
\end{Remark}

In our setup, given a form $F$ with $F(e)\neq 0$, we may assume
$F(e)=1$ so that $f_\x(t)$ is monic in $t$. We
denote by ${\rm Res}_{t_1}(f_\real,f_\imag)$ the resultant of
$f_\real,f_\imag\in\R[\x,t_1,t_2]$ with respect to the variable
$t_1$, which is quasi-homogeneous in the coefficients of $F$ (see also
\cite[Ch.~12]{Gelfand}). The relation to hyperbolicity is easy to guess, but care has to
be taken to account for possible exceptional cases. We show the following.

\begin{Thm} \label{rzres} Let $e=(1,0,\dots,0)$. Given a form $F\in\R[x_0,\dots,x_n]$ of
  degree $d$ with $F(e)=1$, the resultant of $f_\real$ and $f_\imag$ with respect to
  $t_1$ is a polynomial in $t_2$ and factors into
  \[
    {\rm Res}_{t_1}(f_\real,f_\imag)=t_2^{p}\cdot \mathcal{R}_F
  \]
  for some $p\ge d$, and $\mathcal{R}_F$ is a polynomial in $t_2,x_1,\dots,x_n$ not divisible
  by $t_2$.
  
  (1) If $F$ is hyperbolic with respect to $e$, then
  $\mathcal{R}_F$ has constant sign, i.e.~it is everywhere
  non-negative or everywhere non-positive.\\
  (2) Conversely, if
  $\mathcal{R}_F(t_2,x_1,\dots,x_n)$ does not vanish in any point
  $(s,a)\in\R\times\R^n$ with $s\neq 0$, then $F$
  is hyperbolic with respect to $e$.
\end{Thm}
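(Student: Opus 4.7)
The plan is to combine Lemma~\ref{nort} with the standard fact that the resultant vanishes exactly when two polynomials share a root, provided the leading coefficients do not both degenerate. The factorization ${\rm Res}_{t_1}(f_\real,f_\imag) = t_2^p \cdot \mathcal{R}_F$ with $p \geq d$ is immediate from the divisibility observation made just before the theorem; we simply take $p$ to be the exact multiplicity of $t_2$ in the resultant. The decisive reduction is that once we specialize $(t_2,\x) = (s,a) \in \R \times \R^n$ with $s \neq 0$, the leading coefficients of $f_\real$ and $f_\imag$ in $t_1$ are $1$ and $ds$ respectively, both nonzero, so $\mathcal{R}_F(s,a) = 0$ holds if and only if the specialized univariate polynomials $f_\real(t_1,s,a), f_\imag(t_1,s,a) \in \R[t_1]$ share a root in $\C$.

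For part~(2) I would argue by contraposition. If $F$ is not hyperbolic, Lemma~\ref{nort} yields $s_1, s_2 \in \R$ and $a \in \R^n$ with $s_2 \neq 0$ and $f_\real(s_1,s_2,a) = f_\imag(s_1,s_2,a) = 0$. Then $t_1 = s_1$ is a real, a fortiori complex, common root, so $\mathcal{R}_F(s_2, a) = 0$ with $s_2 \neq 0$, contradicting the hypothesis.

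Part~(1) is more delicate, because the resultant also vanishes at common \emph{non-real} roots, and these have no obvious a priori meaning for hyperbolicity. The main new step is therefore to show that a complex common root $\zeta \in \C$ of $f_\real(\cdot,s,a)$ and $f_\imag(\cdot,s,a)$ with $s \neq 0$ still forces non-hyperbolicity. Using the polynomial identities $F(t_1 \pm it_2,\x) = f_\real \pm i f_\imag$ (valid as identities in $\C[t_1,t_2,\x]$, hence at complex arguments for $t_1$), such a $\zeta$ gives $F(\zeta + is, a) = F(\zeta - is, a) = 0$; writing $\zeta = \sigma + i\tau$, the two numbers $\zeta \pm is = \sigma + i(\tau \pm s)$ are roots of $f_a(t)$, and since $s \neq 0$ at least one has nonzero imaginary part, contradicting real-rootedness. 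Thus $F$ hyperbolic implies $\mathcal{R}_F$ nowhere zero on $\{s \neq 0\} \times \R^n$, and by continuity $\mathcal{R}_F$ has constant sign on each of the two components $\{s > 0\} \times \R^n$ and $\{s < 0\} \times \R^n$.

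The most delicate step, and the one I expect to be the main obstacle, is reconciling these two signs so that continuity can extend them across $\{s = 0\}$ to all of $\R^{n+1}$. I would settle this by a parity argument: $f_\real$ is even and $f_\imag$ is odd in $t_2$, so under $t_2 \mapsto -t_2$ the resultant is multiplied by $(-1)^d$ (from scaling $f_\imag$ by $-1$, raised to $\deg_{t_1} f_\real = d$), while the prefactor $t_2^p$ contributes $(-1)^p$. Comparing the two gives $\mathcal{R}_F(-t_2,\x) = (-1)^{d-p}\mathcal{R}_F(t_2,\x)$; the odd case is ruled out by the normalization $t_2 \nmid \mathcal{R}_F$, forcing $\mathcal{R}_F$ to be an even polynomial in $t_2$. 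The signs on the two half-spaces then coincide, and continuity across $\{s = 0\}$ yields the desired constant sign on all of $\R^{n+1}$, completing~(1).
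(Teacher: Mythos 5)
Your proof is correct, and in part (1) it takes a genuinely different and in fact stronger route than the paper. Part (2) and the factorization are essentially the same as the paper's argument, so I focus on (1).

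The paper proves (1) by a topological sign-change argument: if $\mathcal{R}_F$ were indefinite, some irreducible factor would change sign along the $t_2$-direction at a point $(s',a')$ with $s'\neq 0$; the resultant would then have a real root of odd multiplicity in $t_2$ at $s'$, forcing the number of intersections of $f_\real$ and $f_\imag$ over $t_2=s'$ to be odd, hence at least one such intersection to be real, contradicting hyperbolicity. That argument only excludes \emph{sign changes}, not zeros, of $\mathcal{R}_F$.

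Your argument instead rules out any zero of $\mathcal{R}_F$ on $\{s\neq 0\}$. The key observation --- that a common \emph{complex} root $\zeta$ of the specialized $f_\real(\cdot,s,a)$ and $f_\imag(\cdot,s,a)$ yields, via the polynomial identities $F(t_1\pm it_2,\x)=f_\real\pm if_\imag$, the two roots $\zeta\pm is$ of $f_a$, at least one of which is non-real when $s\neq 0$ --- is new relative to the paper and is exactly what lets you pass from real to complex intersections. Combined with your part (2), this actually establishes an \emph{equivalence}: $F$ is hyperbolic if and only if $\mathcal{R}_F$ has no zero $(s,a)$ with $s\neq 0$. The paper only proves the two one-sided implications and leaves open (in the Remark following the theorem) whether the weaker ``constant sign'' criterion is sufficient. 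Your argument does not resolve that question, since a semidefinite $\mathcal{R}_F$ can still vanish off $\{s=0\}$, but it does give a cleaner characterization than what the paper records. The parity argument ($\mathcal{R}_F$ even in $t_2$ because $t_2\nmid\mathcal{R}_F$) is a correct and necessary finishing step to glue the signs on $\{s>0\}$ and $\{s<0\}$, and is absent from the paper because the paper's argument never separates the two half-spaces. One small point worth making explicit when you invoke ``the resultant vanishes iff the specializations share a complex root'': you are using that the Sylvester determinant commutes with specialization precisely because the leading $t_1$-coefficients $1$ and $ds$ of $f_\real$ and $f_\imag$ do not vanish for $s\neq 0$, so the formal resultant agrees with the resultant of the specialized polynomials; you note the leading coefficients but should state this conclusion.
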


\begin{proof}
  We have already observed that the resultant is always divisible by $t_2^d$.
  
  (1) Let $F$ be hyperbolic of degree $d$ and write
  $S=\{(s,a)\in\R\times\R^n\ |\ s\neq 0\}$.  Suppose for contradiction
  that $\mathcal{R}_F$ is indefinite, i.e.~there is a point $(s,a)$
  such that the sign of $\mathcal{R}_F$ is not constant in any
  neighborhood of $(s,a)$. This implies that $\mathcal{R}_F$ has an
  irreducible factor $Q$ that changes sign in $(s,a)$.  We distinguish
  two cases:

  Assume first that $(s,a)\in S$. Note that $Q$ cannot be a polynomial in
  $x_1,\dots,x_n$ alone (independent of $t_2$), since this would imply
  that $f_{a}(t)$ vanishes identically. Therefore, $Q$ must have a
  zero $(s',a')\in S$ (in fact in any neighborhood of $(s,a)$) such
  that $Q(t_2,a')$ changes sign at $t_2=s'$. Thus
  ${\rm Res}_{t_1}(f_\real,f_\imag)(t_2,a')$ has a real root at
  $t_2=s'\neq 0$ of odd multiplicity. It follows that $f_\real$ and
  $f_\imag$ have an odd number of intersection points (counted with
  multiplicity) with second coordinate $s'$. Thus there is a real such
  point, i.e.~a real number $r'$ such that $r'+is'$ is a non-real root
  of $f_a(t)$, contradicting hyperbolicity.

  If $(s,a)\notin S$, then $Q$ changes sign along 
  the hyperplane $\R^{n+1}\setminus S$, which would imply
  $Q=t_2$, contradicting $t_2\nmid\mathcal{R}_F$.
  
  (2) Suppose that $F$ is not hyperbolic. Then there is a point
  $a\in\R^n\setminus\{0\}$ for which $f_a(t)$ has a non-real
  zero. Then $f_\real(t_1,t_2,a)$ and $f_\imag(t_1,t_2,a)$ have a real
  intersection point $(t_1,t_2)=(r,s)$ with $s\neq 0$, so that $(s,a)$
  is a point in $S$ with ${\rm Res}_{t_1}(f_\real,f_\imag)(s,a)=0$ and
  hence $\mathcal{R}_F(s,a)=0$.
\end{proof}

\begin{Remark}
  It is natural to ask whether the stronger assumption in (2) is
  really needed or whether the criterion in (1) is in fact necessary
  and sufficient for hyperbolicity. We do not see how to show this
  without some further information about the factor $\mathcal{R}_F$ in
  the resultant. For instance, the converse in (1) would hold if
  $\mathcal{R}_F$ were generically irreducible or at least
  square-free. This should be expected and is verified in the examples
  below, but does not seem so easy to prove. Thus we make the
  following conjecture. If true, it would allow for a neater version
  of Thm.~\ref{rzres}.
\end{Remark}

\begin{Conjecture}
  For a generic form $F$, the factor $\mathcal{R}_F$ of the resultant in
  Theorem \ref{rzres} is irreducible.
\end{Conjecture}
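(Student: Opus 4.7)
The plan is to compute $\mathcal{R}_F$ explicitly in terms of the roots of $f_\x$ and then apply a Galois-theoretic argument.

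Set $g(t_1) = f_\x(t_1+it_2)$ and $h(t_1) = f_\x(t_1-it_2)$, so that $(g, h) = (f_\real + if_\imag, f_\real - if_\imag)$. Since the two pairs are related by an invertible constant linear transformation of polynomials of the same degree $d$ in $t_1$, their resultants in $t_1$ differ by a nonzero constant factor. Letting $\rho_1, \dots, \rho_d \in \overline{\C(\x)}$ denote the roots of $f_\x(t)$, the roots of $g$ in $t_1$ are $\rho_j - it_2$, so the product formula gives
\[
  {\rm Res}_{t_1}(g, h) \ =\ \prod_{j,k=1}^d (\rho_j - \rho_k - 2it_2).
\]
Separating the $d$ diagonal terms ($j = k$) yields a factor $(-2it_2)^d$, and pairing $(j,k)$ with $(k,j)$ off the diagonal gives
\[
  {\rm Res}_{t_1}(g,h) \ =\ (-2it_2)^d\, (-1)^{\binom{d}{2}} \prod_{j<k}\bigl((\rho_j-\rho_k)^2 + 4t_2^2\bigr).
\]
The $t_2 = 0$ constant term of $\prod_{j<k}((\rho_j-\rho_k)^2 + 4t_2^2)$ equals the discriminant of $f_\x$, a nonzero polynomial in $\x$ for generic $F$; this confirms $p = d$ and identifies $\mathcal{R}_F$, up to a nonzero constant factor, with $\prod_{j<k}((\rho_j - \rho_k)^2 + 4t_2^2)$.

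The key genericity input is that, for $F$ in a Zariski open dense subset of the space of forms of degree $d$ in $x_0, \dots, x_n$ (with $n \ge 2$), the Galois group of $f_\x(t) \in \C(\x)[t]$ equals the full symmetric group $S_d$. Geometrically, this is the monodromy group of the degree-$d$ cover $\{F = 0\} \to \C^n$: for generic $F$ the discriminant hypersurface in $\C^n$ is irreducible with simple ramification at generic points, producing transpositions in the monodromy, and the off-diagonal component of the fiber product is irreducible, forcing $2$-transitivity; together these imply the monodromy is all of $S_d$. This step is the main obstacle: one cannot invoke Hilbert irreducibility over $\C$, so the argument has to be geometric/monodromy-theoretic, and degenerate cases such as $n \le 1$ must be excluded (for $n = 1$, any binary form factors completely over $\C$, the Galois group of $f_\x$ is trivial, and the conjecture fails).

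With the $S_d$ statement in hand, over $\overline{\C(\x)}[t_2]$ the polynomial $\mathcal{R}_F$ splits into the $d(d-1)$ linear factors $(\rho_j - \rho_k) - 2it_2$ indexed by ordered pairs $(j,k)$ with $j \neq k$. Since $S_d$ acts transitively on such ordered pairs for $d \ge 2$, these linear factors form a single orbit of $\mathrm{Gal}(\overline{\C(\x)}/\C(\x))$, so $\mathcal{R}_F$ is irreducible in $\C(\x)[t_2]$. Its leading coefficient in $t_2$ is a nonzero constant, so it is primitive over $\C[\x]$, and Gauss's lemma yields irreducibility in $\C[t_2, x_1, \dots, x_n]$, and a fortiori in $\R[t_2, x_1, \dots, x_n]$.
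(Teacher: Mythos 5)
The paper offers no proof here---it is explicitly left as a conjecture, with the remark that it ``does not seem so easy to prove.'' So you are filling a genuine gap, and your argument is, in outline, correct; here is a careful assessment.

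Your identification of the normalized factor,
\[
  \mathcal{R}_F \;=\; c\prod_{j<k}\bigl((\rho_j-\rho_k)^2+4t_2^2\bigr),
\]
is right, and it is worth noting that it also gives a clean proof of Remark~\ref{Remark:FactorizationResultant} (the exact power of $t_2$ is $d$, since the constant term in $t_2$ is the discriminant of $f_\x$). It checks out against the paper's explicit computations: for $d=2$ it gives $c\bigl((f_1^2-4f_2)+4t_2^2\bigr)$, matching $\mathcal{R}_F=4f_2-f_1^2-4t_2^2$ with $c=-1$; for $d=3$ the $t_2^4$ and $t_2^6$ coefficients come out as $32(f_1^2-3f_2)$ and $64$, matching $g_3,g_4$; and for the quartic $F=x_0^4-x_1^4-x_2^4$ one gets exactly $256(t_2^4-x_1^4-x_2^4)(4t_2^4+x_1^4+x_2^4)^2$, matching the paper. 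Your explanation of why ${\rm Res}_{t_1}(g,h)$ and ${\rm Res}_{t_1}(f_\real,f_\imag)$ differ only by a nonzero constant deserves a word more of care, since the two Sylvester matrices have different sizes; the clean way to say it is that ${\rm Res}_{d,d}(f_\real,f_\imag)$ relates to ${\rm Res}_{d,d}(g,h)$ by column operations of determinant a nonzero constant, and ${\rm Res}_{d,d}(f_\real,f_\imag)={\rm Res}_{d,d-1}(f_\real,f_\imag)$ because $f_\real$ is monic.

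Two points need strengthening. First, transitivity of the Galois action on the linear factors $(\rho_j-\rho_k)-2it_2$ (ordered pairs, $j\ne k$) only shows that $\mathcal{R}_F$ is a constant times a \emph{power} of a single irreducible polynomial; to conclude irreducibility you also need $\mathcal{R}_F$ to be squarefree in $t_2$, i.e.\ the quantities $\rho_j-\rho_k$ pairwise distinct in $\ol{\C(\x)}$. This is again a generic condition, but it should be stated: for $n\ge 1$ and a generic $F$, specializing $\x$ to a generic $a\in\C^n$ yields generic complex roots $\rho_j(a)$, whose pairwise differences are distinct; since equality of two differences is a Zariski-closed condition on $F$, genericity follows. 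Second, the $S_d$-monodromy statement is the crux and is only sketched. It is classical (transitivity from irreducibility of the cover, transpositions from simple tangency at a generic point of the irreducible discriminant, and $\pi_1$ of the complement generated by such loops; a transitive subgroup of $S_d$ generated by transpositions is $S_d$), but since this is where the real content lies, a citation or a self-contained proof for $n\ge 2$ would be needed to call this complete.

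Finally, your observation that the conjecture fails for $n=1$ and $d\ge 3$ (where $\mathcal{R}_F$ splits into $\binom{d}{2}$ quadratics $\rho_{jk}^2 x_1^2+4t_2^2$ with $\rho_{jk}\in\C$) is a genuine correction to the statement in the paper: the conjecture must carry the hypothesis $n\ge 2$.
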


For quadratic and cubic forms, the condition in Thm.~\ref{rzres} can
be made quite a bit more explicit, since hyperbolicity can be decided
by looking only at the  discriminant of $f_\x(t)$ with respect to
$t$. We will take a closer look at this case and see how it compares
to our general analysis above.

\medskip \textbf{Quadratic Forms}. Let
$F=x_0^2+f_1(\x)x_0+f_2(\x) \in \R[x_0,\x]$ be a quadratic form. It is
hyperbolic with respect to $e=(1,0,\dots,0)$ if and only if the
polynomial $f_\x(t)=t^2+f_1(\x)t+f_2(\x)$ is real-rooted for all
$\x\in\R^n$. This will
be the case if and only if $f_1^2-4f_2$ is nonnegative in $\x$
(compare also \cite{NT12} and \cite{papriquadratic}). This is a
quadratic form in $\x$, hence it is nonnegative if and only if it is a
sum of squares in $\R[\x]$.  Let us see how this translates into real
and imaginary parts, which are given by
\begin{align*}
f_\real(t_{1},t_{2})&=(t_{1}^{2}-t_{2}^{2})+t_{1}f_{1}+f_{2} \\
f_\imag(t_{1},t_{2})&=2t_{1}t_{2}+t_{2}f_{1}
\end{align*}
Thus the resultant of these two bivariate polynomials $f_\real, f_\imag$ with respect to $t_{1}$ is given by
\[
{\rm Res}_{t_1}(f_\real,f_\imag)=\det \bmatrix{1 & 2t_{2} & 0\\f_{1} & f_{1}t_{2} & 2t_{2}\\f_{2}-t_{2}^{2} & 0 & f_{1}t_{2}}=t_{2}^{2}(4f_{2}-f_{1}^{2}-4t_{2}^{2})
\]
Thus $\mathcal{R}_F$ in Theorem \ref{rzres} is
  \[
    \mathcal{R}_F=4f_2-f_1^2-4t_2^2.
  \]
  Indeed, this polynomial is nonpositive if and only $f_{1}^{2}-4f_{2}$ is
  nonnegative. 
\begin{Example}{\rm
     Let $F = x_0^2- x_{1}^{2}-x_{2}^{2} -\dots-x_{n}^{2}$. Then
\[
{\rm Res}_{t_1}(f_\real, f_\imag) = 4 t_{2}^{2}(x_{1}^{2}+x_{2}^{2}+\dots+x_{n}^{2})^{2}(1+t_{2}^{2}(x_{1}^{2}+x_{2}^{2} \dots+x_{n}^{2}))
\]
which does not vanish for $t_{2} \in \R \setminus \{0\}$ and any
$\x \in \R^{n}$. Hence $F$ is hyperbolic. Also note that
$f^{2}_{1}-4f_{2}=4(x_{1}^{2}+\dots+x_{n}^{2})$ is a sum of
squares.  }
\end{Example}

\bigskip
\noindent\textbf{Cubic Forms}.
Let
\[
  F(\x)=x_0^3+f_1(\x)x_0^2+f_2(\x)x_0+f_3(\x)
\]
be a cubic form. Again, hyperbolicity of $F$ with respect to
$e=(1,0,\dots,0)$ is equivalent to
$t^3+f_1(\x)t^{2}+f_2(\x)t+f_3(\x)$ being real rooted in $t$ for
all $\x=a\in\R^n$. This is the case if and only if the cubic
discriminant $\Delta$ of $f_\x(t)$ is nonnegative for all $\x$. It is given by
\[
  \Delta=18f_{1}f_{2}f_{3}-4f_{2}^{3}+f_{1}^{2}f_{2}^{2}-4f_{1}^{3}f_{3}-27f_{3}^{2}
\]
Again, we compare this to our resultant. The real and imaginary parts
are given by
\begin{align*}
f_\real(t_{1},t_{2})&=t_{1}^{3}+t_{1}^{2}f_{1}+t_{1}(f_{2}-3t_{2}^{2})-t_{2}^{2}f_{1}+f_3 \\
f_\imag(t_{1},t_{2})&=3t_{1}^{2}t_{2}+2t_{1}t_{2}f_{1}+t_{2}f_{2}-t_{2}^{3}.
\end{align*}
Thus the resultant of $f_\real$ and $f_\imag$ with respect to $t_{1}$
comes out as
\begin{align*} 
{\rm Res}_{t_1}(f_\real,f_\imag)
&=\bmatrix{1 & 0 & 3t_{2} & 0& 0\\f_{1} &1& 2t_{2}f_{1}&3t_{2}& 0\\f_{2}-3t_{2}^2& f_{1}& t_{2}f_{2}-t_{2}^3& 2t_{2}f_{1} &3t_{2}\\f_{3}-t_{2}^2f_{1}& f_{2}-3t_{2}^2& 0 &t_{2}f_{2}-t_{2}^3& 2f_{1}t_{2}\\0 &f_{3}-t_{2}^2f_{1}& 0& 0& t_{2}f_{2}-t_{2}^3}\\
  &=-t_{2}^{3}[\Delta+t_{2}^{2}g_{2}+t_{2}^{4}g_{3}+t_{2}^{6}g_{4}].
\end{align*}
where
\[
  g_{2}=4(f_{1}^{2}-3f_{2})^2,\quad
  g_{3}=32(f_{1}^{2}-3f_{2}),\quad g_{4}=64.
\]
Thus $\mathcal{R}_F$ in Thm.~\ref{rzres} is the polynomial
\[
  \mathcal{R}_F=\Delta+t_{2}^{2}g_{2}+t_{2}^{4}g_{3}+t_{2}^{6}g_{4}.
\]
In this case, we find indeed that nonnegativity of $\Delta$ is
equivalent to nonnegativity of $\mathcal{R}_F$. To see this, note that
$\mathcal{R}_F$ is nonnegative if and only if the cubic equation
  \begin{equation}
    \Delta+t_{2}^{2}g_{2}+t_{2}^{4}g_{3}+t_{2}^{6}g_{4}=0\label{eq:cubicdiscriminant}
  \end{equation} in $t_2^2$ has no
  positive real root. Assume that $\Delta$ is nonnegative. This means
  that
  $4(f_{1}^{2}-3f_{2})^{3}-(2f_{1}^{3}-9f_{1}f_{2}+27f_{3})^{2}$
  is nonnegative. Therefore, $f_{1}^{2}-3f_{2}$ and hence $g_3$ must be
  nonnegative. This shows that
  \eqref{eq:cubicdiscriminant} has only nonnegative coefficients and
  hence  no positive solution in
  $t_2^2$. Thus $\mathcal{R}_F$ is nonnegative.

\bigskip
For forms of degree at least $4$, it is not enough to consider only
the discriminant, as the following simple example shows.
\begin{Example}
  The quartic
  \[
    F=x_0^4-x_1^4-x_2^4
  \]
  is not hyperbolic with respect to $e=(1,0,0)$. However,
  $f_\x(t)=t^4-(x_1^4+x_2^4)$ has distinct roots in $t$ for all
  $(x_1,x_2)\neq (0,0)$, hence the discriminant of $f_\x(t)$ has constant
  sign.

  That $F$ is not hyperbolic is however reflected in the fact that
  \[
    \mathcal{R}_F=256\bigl(t_2^4-x_1^4-x_2^4\bigr)\bigl(4t_2^4+x_1^4+x_2^4\bigr)^2
  \]
  is clearly neither nonnegative nor nonpositive.
\end{Example}

\begin{Example}
Consider our parametrized cubic
\begin{equation*}
    F_c(x_0,x_1,x_2)\ =\ x_0x_2^2-\left(x_1-\frac 1c x_0\right)\bigl(x_1^2- cx_0^2\bigr)
\end{equation*}
As noted, it is hyperbolic with respect to $(1,0,0)$ if and only if $c >0$. Indeed, substituting $c=b^2$, 
we can represent the discriminant $\Delta$ as the sum of squares
\begin{equation*}
 \Delta=(2b^6-2)^2x^6+x^2y^4b^{10}+20x^4y^2b^8+4y^6b^6+12x^2y^4b^4+12x^4y^2b^2.
\end{equation*}
\end{Example}

\subsection{Using the real Nullstellensatz}
Since the conditions in Theorem \ref{rzres} need to be satisfied for
any $\x \in \R^{n}$ and the resultants quickly become quite large, it
is not clear how useful this method is in practice. Of course, it is
not necessary to rely on resultants to test whether the real and
imaginary part of a polynomial intersect. One can also employ the
real Nullstellensatz, which will also translate into a sums-of-squares
condition.

The real Nullstellensatz is the following general criterion for
infeasibility; see \cite{BCR},\cite{Ma08}.
\begin{Thm}[Real Nullstellensatz]  A system $f_1,\dots, f_k\in \R[\x]$
  of real polynomials in variables $\x=(x_1,\dots,x_n)$ has no common
  zero in $\R^n$ if and only if there exist polynomials $q_1,\dots,
  q_k$ and a sum of squares $s$ in $\R[\x]$ such that
  \[
    s + q_1f_1 + \dots + q_kf_k = -1
  \]
\end{Thm}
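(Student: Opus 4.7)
The ``if'' direction is immediate. Suppose we have a representation $s + q_1 f_1 + \cdots + q_k f_k = -1$ with $s = \sum_j h_j^2$ a sum of squares. Evaluating at any putative common zero $a \in \R^n$ of $f_1,\ldots,f_k$ would give $\sum_j h_j(a)^2 = -1$, which is absurd since the left side is nonnegative. Hence no such common zero exists.

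For the ``only if'' direction I would argue contrapositively: assuming no relation of the form $s + \sum_i q_i f_i = -1$ exists, the plan is to produce a common real zero of the $f_i$. Write $I = (f_1,\ldots,f_k)$ and let $T \subseteq \R[\x]$ denote the cone of sums of squares. The hypothesis translates to $-1 \notin T + I$, so $T + I$ is a proper preordering of $\R[\x]$. By Zorn's lemma, I extend $T + I$ to a maximal preordering $P$ still satisfying $-1 \notin P$. A standard argument, using the key identity $4ab = (a+b)^2 - (a-b)^2$ to propagate membership, shows that such a maximal $P$ is automatically an \emph{ordering}: its support $J = P \cap (-P)$ is a prime ideal containing $I$, and $P$ induces a total order on the quotient domain $A = \R[\x]/J$, compatible with its $\R$-algebra structure so as to extend the usual order of $\R$.

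From here, pass to the ordered fraction field $K$ of $A$ and then to its real closure $R$. The composition $\R[\x] \to A \hookrightarrow K \hookrightarrow R$ is an $\R$-algebra homomorphism sending each $f_i$ to $0$, so the images $\xi_1,\ldots,\xi_n \in R$ of $x_1,\ldots,x_n$ form a common zero of the system in $R^n$. Now I invoke the Tarski--Seidenberg transfer principle: the existential sentence ``$\exists y_1,\ldots,y_n: f_1(y) = \cdots = f_k(y) = 0$'' is a first-order formula in the language of ordered fields with coefficients in $\R$, so it holds over the real closed field $R$ if and only if it holds over $\R$. This yields a common real zero, contradicting the hypothesis.

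The main obstacle is the Zorn extension step, in particular the verification that a maximal proper preordering descends to a total ordering on a prime quotient; by contrast the model-theoretic transfer at the end is entirely standard. Detailed accounts along these lines can be found in \cite{BCR} and \cite{Ma08}.
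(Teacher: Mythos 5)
The paper does not prove this theorem; it quotes the Real Nullstellensatz as a standard result and cites \cite{BCR} and \cite{Ma08} for it. Your argument is a correct rendition of the standard proof found in exactly those references: the ``if'' direction by direct evaluation at a putative zero, and the ``only if'' direction by the Artin--Schreier machinery --- observe $-1 \notin T+I$ makes $T+I$ a proper preordering, extend by Zorn to a maximal proper preordering, verify this is an ordering with prime support $J$, pass through the ordered field of fractions of $\R[\x]/J$ to its real closure, and finish with the Tarski--Seidenberg transfer principle to bring the zero back down to $\R^n$. The small technical points you flag (maximal proper preorderings are orderings; the induced order on $\R$ is the standard one because $P$ contains all squares and is proper) are the genuine crux and are handled carefully in both \cite{BCR} and \cite{Ma08}, so your outline is sound. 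Since the paper itself offers no proof, there is nothing to contrast this with beyond noting it faithfully reproduces the cited sources' approach.
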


Reading the identity in the real Nullstellensatz modulo the ideal
generated by $f_1,\dots,f_k$, we obtain the following equivalent
formulation: If $\I$ is an ideal $\R[\x]$, then the real variety
$\V_\R(\I)$ defined by $\I$ in $\R^n$ is empty if and only if $-1$ is
a sum of squares in the residue ring $\R[x]/\I$.

Testing this sum-of-squares condition can be translated into a
semidefinite program, either directly or 
combined with a Gr\"obner basis computation working in $\R/\I$. We did
some experiments in \texttt{Macaulay2} with the \texttt{SOSm2} package
(\cite{Macaulay2}, \cite{Parrilososcomp}).

Applying this to our problem, we are given a form $F\in\R[x_0,\x]$ and
wish to test for hyperbolicity with respect to $e=(1,0,\dots,0)$. We
form $f_\x(t)$ and decompose into real and imaginary part. Then $F$ is
hyperbolic if and only if $f_\real$ and $f_\imag$ have no common real
root in $t_1,t_2,\x$ with $t_2\neq 0$ (Lemma \ref{nort}). This is
equivalent to the system
\[
 f_\real,\ f_\imag,\ 1-yt_2
\]
with one additional variable $y$ being infeasible. Thus we obtain the
following criterion for hyperbolicity.

\begin{Prop} \label{proprzsdp}
Let $F\in\R[x_0,\dots,x_n]$ be a form of degree $d$ with $F(e)\neq 0$
and let $\I$ be the ideal generated by $f_\real$, $f_\imag$, $1-yt_2$
in $A=\R[t_1,t_2,\x,y]$. Then $F$ is hyperbolic with
respect to $e$ if and only if $-1$ is a sum of squares in $A/\I$. \qed
\end{Prop}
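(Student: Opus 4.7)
The plan is to combine Lemma \ref{nort} with the Real Nullstellensatz via the standard Rabinowitsch trick for the inequality $t_2\neq 0$. By Lemma \ref{nort}, hyperbolicity of $F$ with respect to $e$ is equivalent to $f_\real$ and $f_\imag$ having no common real zero $(s_1,s_2,a)\in\R^{n+2}$ with $s_2\neq 0$. The condition $s_2\neq 0$ is what needs to be absorbed into a purely equational system.

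To do this, I would introduce the auxiliary variable $y$ and note that a point $(s_1,s_2,a)\in\R^{n+2}$ with $s_2\neq 0$ satisfies $f_\real = f_\imag = 0$ if and only if the point $(s_1,s_2,a,s_2^{-1})\in\R^{n+3}$ is a common real zero of $f_\real$, $f_\imag$, and $1-yt_2$ in the larger polynomial ring $A=\R[t_1,t_2,\x,y]$. Conversely, any real solution of this enlarged system must satisfy $t_2\neq 0$ (because $1-yt_2$ vanishes there) and thus projects to a solution of the original type. Hence $F$ is hyperbolic with respect to $e$ if and only if the system $f_\real, f_\imag, 1-yt_2$ has no common real zero in $\R^{n+3}$.

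Now I would apply the Real Nullstellensatz as stated in the preceding theorem to this system: infeasibility in $\R^{n+3}$ is equivalent to the existence of polynomials $q_1,q_2,q_3\in A$ and a sum of squares $s\in A$ with
\[
s + q_1 f_\real + q_2 f_\imag + q_3(1-yt_2) = -1.
\]
Reducing this identity modulo the ideal $\I = (f_\real, f_\imag, 1-yt_2)$, the last three terms vanish, so the identity in $A/\I$ becomes $\bar s = -1$, i.e.\ $-1$ is a sum of squares in $A/\I$. Conversely, lifting any sum-of-squares representative of $-1$ in $A/\I$ back to $A$ produces an identity of the required Positivstellensatz form, giving infeasibility of the system.

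There is no genuine obstacle here; the proposition really is just an assembly of Lemma \ref{nort}, the Rabinowitsch-style reformulation of the open condition $t_2\neq 0$, and the quotient-ring form of the Real Nullstellensatz already recalled in the text. The only point to verify carefully is that the Rabinowitsch equivalence goes through in both directions (which it does, because $1-yt_2=0$ forces $t_2\neq 0$ and, given $t_2\neq 0$, uniquely determines $y=1/t_2$), so no spurious solutions are introduced or lost when passing between the two formulations.
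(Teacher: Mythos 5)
Your proof matches the paper's own reasoning exactly: the proposition is stated with a \qed precisely because its proof is the immediately preceding discussion, namely Lemma \ref{nort}, the Rabinowitsch-style encoding of $t_2\neq 0$ via the auxiliary variable $y$ and the equation $1-yt_2$, and the quotient-ring reformulation of the Real Nullstellensatz. Your write-up spells out the two directions of the Rabinowitsch equivalence and of passing between the Positivstellensatz certificate in $A$ and the sum-of-squares condition in $A/\I$, but there is no substantive difference in approach.
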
 

\section{The discriminant method}\label{sec:Discriminant}
Our final method for testing hyperbolicity is based on an observation due to Nuij in \cite{Nui68}, also used in \cite{PL17}. We will work in the following setup.
Let $e=(1,0,\dots,0)\in\R^{n+1}$, $d\ge 1$, $\x=(x_1,\dots,x_n)$ as
before, and consider the sets
\begin{align*}
\mathcal{F}&=\bigl\{F\in\R[t,\x]\: |\: F\text{ is homogeneous of degree }d\text{ and }F(e)=1\bigr\}\\
\mathcal{H}&=\bigl\{F\in\mathcal{F}\:|\: F\text{ is hyperbolic with respect to }e\}.
\end{align*}
Note that $F$ lies in $\mathcal{H}$ if and only if $F(t,a)$ is real
rooted in $t$ for all $a\in\R^n$ (c.f.~\S\ref{sec:Hyperbolicity}). 

Nuij constructed an explicit path in the space of polynomials
connecting any given polynomial to a fixed polynomial in
$\mathcal{H}$. We consider the following operators on polynomials $\mathcal{F} \subset \R[t,\x]=\R[t,x_1,\dots,x_n]$. 
\begin{align*}
  T^\ell_s &\colon F\mapsto F+s \ell\frac{\partial F}{\partial
    t}\quad (\ell\in\R[\x] \text{ a linear form})\\
  G_s &\colon F\mapsto F(t,s\x)\\
  H_s &= (T^{x_1}_s)^d\cdots (T^{x_n}_s)^d\\
  N_s &= H_{1-s}G_s\,,
\end{align*}
where $s\in\R$ is a parameter. For fixed $s$, all of these
are linear operators on $\R[t,\x]$ taking the affine-linear subspace
$\mathcal{F}$ to itself. Clearly, $G_s$ preserves hyperbolicity for any $s\in\R$,
and $G_0(f)=t^d$ for all $F\in\mathcal{F}$. The operator $H_s$ is used to
smoothen the polynomials along the path $s\mapsto G_s(F)$. The
exact statement is the following.

\begin{Prop}[Nuij \cite{Nui68}] \label{Prop:Nuij}
For $s\ge 0$, the operators $T^\ell_s$
  preserve hyperbolicity. Moreover, the following holds:
  \begin{enumerate}
	  \item For any $F\in\mathcal{F}$, we have $N_1(F)=F$. 
	  \item The polynomial $N_0(F)$ lies in ${\rm int}(\mathcal{H})$ and is independent of $F$. 
	  \item For $F\in\mathcal{H}$, we have $N_s(F)\in{\rm int}(\mathcal{H})$ for all $s\in [0,1)$.\qed
  \end{enumerate}
\end{Prop}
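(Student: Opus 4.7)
The plan is to prove a univariate Rolle-type lemma, use it for the preliminary claim on $T_s^\ell$, settle (1) and the hyperbolicity parts of (2) and (3) by direct calculation, and finally promote hyperbolicity to interior membership via Nuij's key lemma, which is the technical heart.

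For the preliminary I would show that if $p \in \R[t]$ is real rooted of degree $d$ and $c \in \R$, then $p + cp'$ is also real rooted of degree $d$. For $c \ne 0$, set $q(t) = e^{t/c}p(t)$ so that $p + cp' = c e^{-t/c} q'$; if $p$ has distinct real zeros $\beta_1 < \cdots < \beta_k$ of multiplicities $m_1, \dots, m_k$, then $q'$ inherits $d - k$ zeros at the $\beta_i$ (multiplicity reduction), receives at least $k - 1$ more from Rolle between consecutive $\beta_i$, and one further zero from the vanishing of $q$ at one of $\pm\infty$ (depending on the sign of $c$), for a total of $d$. Evaluating at $a \in \R^n$ with $c = s\ell(a)$ shows that $T_s^\ell$ preserves hyperbolicity. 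Part (1) is then immediate from $T_0^\ell = \mathrm{Id}$ and $G_1 = \mathrm{Id}$, which give $N_1 = H_0 G_1 = \mathrm{Id}$.

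For the hyperbolicity content of (2) and (3): by homogeneity and $F(e) = 1$ one has $G_0(F)(t,\x) = F(t, 0) = t^d$, so $N_0(F) = H_1(t^d)$ is independent of $F$; and $G_s(F)(t, a) = F(t, sa)$ is real rooted in $t$ for every $a$, so $G_s F$, and therefore also $N_s F = H_{1-s}(G_s F)$, are hyperbolic. Interior membership is an open condition coinciding with strict hyperbolicity (the discriminant of $F(t, \cdot)$ depends continuously on the coefficients and is nonzero on the unit sphere in $\R^n$ in the strict case). It thus remains to establish Nuij's key lemma: if $F \in \mathcal{H}$ and $u > 0$, then $H_u F$ is strictly hyperbolic.

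This is the main obstacle. A short induction gives the operator identity $((T_u^\ell)^k F)(t, a) = (1 + c\partial_t)^k F(t, a)$ with $c = u\ell(a)$, and since the operators $1 + c_j \partial_t$ commute,
\[
 (H_u F)(t, a) \ =\ \prod_{j=1}^n (1 + c_j \partial_t)^d \, F(t, a), \qquad c_j = u a_j.
\]
Combining with the exponential-shift identity $(1 + c\partial_t)^d p = c^d e^{-t/c} (e^{t/c} p)^{(d)}$ for $c \ne 0$, an iterated Rolle argument in the spirit of the preliminary shows that the number of distinct real zeros of $(e^{t/c} p)^{(k)}$ strictly increases with $k$ until it reaches $d$: as long as some zero has multiplicity $\ge 2$, the next derivative both reduces that multiplicity and produces a new distinct zero (in a freshly opened gap or at the appropriate infinity). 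Hence $(1 + c\partial_t)^d$ converts any real rooted degree-$d$ polynomial into a strictly real rooted one and thereafter preserves strict real rootedness. For arbitrary $a \in \R^n \setminus \{0\}$, some $c_j$ is nonzero; the corresponding factor in the product produces $d$ simple real zeros of $(H_u F)(t, a)$, and the remaining factors preserve this, yielding strict hyperbolicity of $H_u F$ and completing (2) and (3).
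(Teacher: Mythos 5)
The paper states Proposition \ref{Prop:Nuij} purely as a citation to Nuij \cite{Nui68} and does not reproduce a proof, so there is no in-paper argument to compare against; your proposal is, in effect, a reconstruction of Nuij's own argument. Your reconstruction is correct: the exponential-shift identity $(1+c\partial_t)p = c\,e^{-t/c}\partial_t(e^{t/c}p)$ together with the Rolle count (multiplicity reduction at repeated zeros, one zero in each gap, one at the decaying end) does show that $T^\ell_s$ preserves real-rootedness along each line; the operator identity $(H_uF)(t,a)=\prod_j(1+ua_j\partial_t)^dF(t,a)$ is right; the observation that the number of distinct real zeros is nondecreasing under $p\mapsto p+cp'$ and strictly increases whenever a multiple zero persists gives, after $d$ applications of a factor with $c_j\neq 0$, a separated-root polynomial; and "strictly hyperbolic $\Rightarrow$ interior" (the only direction you actually need, by compactness of the unit sphere and continuity of the roots) then yields (2) and (3). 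One small note: your Rolle argument does not use $s\ge 0$ at all, so you in fact prove the preliminary claim for all $s\in\R$, which is fine since it is a priori stronger.
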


For $F\in\mathcal{F}$, we call $[0,1]\ni s\mapsto N_s(F)$ the \emph{N-path} of $F$. 

\begin{Cor}\label{Cor:NpathCross}
	A form $F\in\mathcal{F}$ is hyperbolic if and only if the N-path does not cross the boundary of $\mathcal{H}$, i.e.~$N_s(F)\in\mathcal{H}$ for all $s\in (0,1)$.\qed
\end{Cor}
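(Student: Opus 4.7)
The plan is to derive both implications directly from Proposition \ref{Prop:Nuij}, supplemented only by the observation that $\mathcal{H}$ is a closed subset of $\mathcal{F}$ in the coefficient topology. For the forward direction, suppose $F\in\mathcal{H}$. Item (3) of Proposition \ref{Prop:Nuij} gives $N_s(F)\in{\rm int}(\mathcal{H})\subseteq\mathcal{H}$ for every $s\in[0,1)$, and item (1) gives $N_1(F)=F\in\mathcal{H}$. In particular $N_s(F)\in\mathcal{H}$ for all $s\in(0,1)$, which is one half of the claim.

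For the converse I would argue by contraposition. Assume $F\in\mathcal{F}\setminus\mathcal{H}$, and aim to produce some $s\in(0,1)$ with $N_s(F)\notin\mathcal{H}$. The key observation is that $\mathcal{H}$ is closed in $\mathcal{F}$ under coefficient-wise convergence: if $(F_k)\subset\mathcal{H}$ converges to some $F\in\mathcal{F}$, then for every $a\in\R^n$ the monic polynomials $F_k(t,a)\in\R[t]$ converge to $F(t,a)$, and since the roots of a monic univariate polynomial depend continuously on its coefficients and any limit of real numbers is real, $F(t,a)$ remains real-rooted, so $F\in\mathcal{H}$.

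With this in hand, note that each operator $N_s$ is polynomial in $s$, so $s\mapsto N_s(F)$ is a continuous map from $[0,1]$ into $\mathcal{F}$, with endpoint $N_1(F)=F$ by item (1) of Proposition \ref{Prop:Nuij}. If $N_s(F)\in\mathcal{H}$ held for every $s\in(0,1)$, then letting $s\to 1^-$ and using the closedness just established would force $F=N_1(F)\in\mathcal{H}$, contradicting the assumption. Hence some $s\in(0,1)$ must satisfy $N_s(F)\notin\mathcal{H}$, which finishes the contrapositive. The only substantive ingredient beyond bookkeeping with Proposition \ref{Prop:Nuij} is the closedness of $\mathcal{H}$, and that is the main thing one should expect to verify carefully; once it is available, the rest of the argument is a one-line continuity-and-limit statement.
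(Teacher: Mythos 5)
Your argument is correct and supplies exactly the details implicit in the paper's $\qed$: the forward direction is Proposition~\ref{Prop:Nuij}(3), while the converse combines $N_1(F)=F$ from item (1), continuity of $s\mapsto N_s(F)$, and closedness of $\mathcal{H}$ in $\mathcal{F}$. Your identification of the closedness of $\mathcal{H}$ as the one nontrivial ingredient is right, and your justification of it (monic polynomials of fixed degree have uniformly bounded roots depending continuously on the coefficients, and $\R$ is closed in $\C$) is the standard and complete one.
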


The boundary of $\mathcal{H}$ is a subset of the hypersurface in
$\mathcal{F}$ defined by the vanishing of the discriminant
$\Delta\in\R[\x]$ of polynomials in $\mathcal{F}$ with
respect to the variable $t$. The polynomial $\Delta$ can be expressed
via the Sylvester matrix and is homogeneous of degree $2d-2$. We can
test for the criterion in Cor.~\ref{Cor:NpathCross} by restricting the discriminant to the N-path, as follows: Let $F\in\mathcal{F}$ and write
	\[
	\Delta_{N}(F)=\Delta(N_s(F))\in\R[s,\x].
	\]
We call $\Delta_{N}$ the \emph{N-path discriminant}. Our preceding discussion translates to the following statement:

\begin{Cor}\label{Cor:NpathDisc}
	Let $F\in\mathcal{F}$. If $\Delta_{N}(F)(s,a)\neq 0$ for all $s\in (0,1)$ and $a\in\R^n$, then $F$ is hyperbolic.\qed
\end{Cor}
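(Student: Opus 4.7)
The plan is to show that the N-path $s \mapsto N_s(F)$ stays inside $\mathcal{H}$ for all $s \in [0, 1)$; by Corollary~\ref{Cor:NpathCross} this implies $F = N_1(F) \in \mathcal{H}$. The key idea is that the path starts in the interior of $\mathcal{H}$ by Nuij's result, and can only leave $\mathcal{H}$ by first crossing $\partial \mathcal{H}$, which is contained in the discriminant locus; the hypothesis rules exactly this out.

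More concretely, I would introduce the set
\[
S = \{s \in [0,1] : N_s(F) \in \mathcal{H}\}.
\]
Since each $G_s$ and $T^\ell_s$ depends polynomially on $s$, the map $s \mapsto N_s(F)$ is continuous into $\mathcal{F}$ with its coefficient topology. A coefficient-wise limit of real-rooted univariate polynomials is real-rooted, so $\mathcal{H}$ is closed in $\mathcal{F}$; hence $S$ is closed in $[0,1]$. By Proposition~\ref{Prop:Nuij}(2), $N_0(F) \in {\rm int}(\mathcal{H})$, so $0$ lies in the interior of $S$. Put $s^* = \sup\{s \in [0,1] : [0, s] \subset S\}$; then $0 < s^* \le 1$ and $N_{s^*}(F) \in \mathcal{H}$ by closedness. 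Suppose for contradiction that $s^* < 1$. By the definition of $s^*$, there exist $s_k \downarrow s^*$ with $N_{s_k}(F) \notin \mathcal{H}$, so $N_{s^*}(F) \in \partial \mathcal{H}$. By the discussion preceding the corollary, $\partial \mathcal{H}$ lies in the discriminant hypersurface, so $\Delta(N_{s^*}(F))(a) = 0$ for some $a \in \R^n$, giving $\Delta_N(F)(s^*, a) = 0$ with $s^* \in (0, 1)$. This contradicts the hypothesis, so $s^* = 1$ and $[0, 1) \subset S$.

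I do not anticipate any serious obstacle. The only non-routine ingredient beyond Nuij and Corollary~\ref{Cor:NpathCross} is the containment $\partial \mathcal{H} \subset \{F \in \mathcal{F} : \Delta(F) \text{ has a real zero}\}$, which records that a form on the boundary of $\mathcal{H}$ is hyperbolic but not strictly hyperbolic and therefore has a repeated real root in some direction; the hypothesis on $\Delta_N(F)$ is designed to exclude exactly this possibility along the N-path. Everything else is continuity of the polynomial family $N_s(F)$ together with a standard supremum argument.
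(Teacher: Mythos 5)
Your argument is correct and is exactly the argument the paper leaves implicit (the corollary is stated with a \qed and no written proof): since $N_0(F)\in{\rm int}(\mathcal{H})$ and $\mathcal{H}$ is closed, the N-path can only leave $\mathcal{H}$ by meeting $\partial\mathcal{H}$, which lies in the discriminant locus, and the hypothesis rules that out. You merely make explicit, via the supremum/connectedness step, what the paper treats as immediate from Corollary~\ref{Cor:NpathCross} and the containment $\partial\mathcal{H}\subset\{G:\Delta(G)\text{ has a nontrivial real zero}\}$.
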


The converse is not quite true, but we have the following
characterization of hyperbolicity, which is analagous to what we found
for the intersection method.

\begin{Thm}\label{Thm:NpathHyp}
  If a polynomial $F\in\mathcal{F}$ is hyperbolic, then 
   \[
     \Delta_{N}(F)(s,a)\ge 0
   \]
   holds for all $s\in [0,1]$ and $a\in\R^n$. Conversely, if
   $\Delta_N(f)(s,a)>0$ holds for all $s\in [0,1]$ and
   $a\in\R^n\setminus\{0\}$, then $F$ is strictly hyperbolic. 
 \end{Thm}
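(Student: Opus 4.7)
The plan is to combine Nuij's path construction (Proposition \ref{Prop:Nuij}) with two standard facts about the discriminant of a monic real polynomial: (i) if all roots are real, the discriminant is nonnegative; and (ii) along a continuous path of monic real polynomials on which the discriminant is nowhere zero, the number of real roots is constant.

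For the forward implication, assume $F$ is hyperbolic. Proposition \ref{Prop:Nuij}(3) yields $N_s(F) \in {\rm int}(\mathcal{H})$ for $s \in [0,1)$, while (1) gives $N_1(F) = F \in \mathcal{H}$, so $N_s(F)$ is hyperbolic for every $s \in [0,1]$. Since $G_s$ and the operators $T^{x_i}_s$ preserve the value at $e$, each $N_s(F)$ lies in $\mathcal{F}$, and so $N_s(F)(t,a) \in \R[t]$ is a monic real-rooted polynomial of degree $d$ for every $a \in \R^n$. Its discriminant, a product of squared differences of real numbers, is nonnegative, yielding $\Delta_{N}(F)(s,a) \geq 0$.

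For the converse, fix $a \in \R^n \setminus \{0\}$ and study the continuous family $p_s(t) := N_s(F)(t,a) \in \R[t]$, $s \in [0,1]$, of monic degree-$d$ polynomials. By Proposition \ref{Prop:Nuij}(2), $N_0(F) \in {\rm int}(\mathcal{H})$ is strictly hyperbolic, so $p_0$ has $d$ pairwise distinct real roots. The hypothesis $\Delta_N(F)(s,a) > 0$ forces $\Delta(p_s) \neq 0$ for all $s$, so no $p_s$ has a repeated root. I would then show that the set $S := \{s \in [0,1] : p_s \text{ has } d \text{ distinct real roots}\}$ is both open (local constancy of the number of real roots when none are repeated) and closed (limits of real numbers are real, and distinctness is preserved by $\Delta \neq 0$) in $[0,1]$. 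Since $0 \in S$ and $[0,1]$ is connected, $S = [0,1]$, and in particular $p_1 = F(t,a)$ has $d$ distinct real roots. Varying $a$, $F$ is strictly hyperbolic.

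The sole delicate step is the local constancy of the real root count, which rests on the classical observation that a non-real conjugate pair of roots can only migrate to the real axis by first coalescing into a real double root, an event excluded by $\Delta(p_s) \neq 0$. This can be made precise via Rouch\'e's theorem on small disks symmetric under complex conjugation, but the overall argument should remain short and self-contained.
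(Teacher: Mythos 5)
Your proof is correct and follows the same overall strategy as the paper's: use Nuij's path (Proposition \ref{Prop:Nuij}) and study the discriminant along it. The one place you genuinely diverge is the forward direction. The paper first reduces by continuity to the case $F\in{\rm int}(\mathcal{H})$, derives strict positivity of $\Delta_N(F)(s,a)$ for $a\neq 0$ from Proposition \ref{Prop:Nuij}(3), and then passes to the closure to obtain $\ge 0$ in general; you instead observe directly that for every $s\in[0,1]$ and $a\in\R^n$ the polynomial $N_s(F)(t,a)$ is monic, real-rooted and of degree $d$, so its discriminant is a product of squared differences of real numbers and therefore nonnegative. This is more elementary and avoids any appeal to density of the interior of $\mathcal{H}$. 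For the converse, the paper's argument (``Since $N_0(F)(t,a)$ has distinct real roots, $\Delta_N(F)(s,a)$ must vanish for some $s\in(0,1]$'') is exactly your open-and-closed connectedness argument in compressed form; spelling it out as you do, including the Rouch\'e-type local root-counting step, is a welcome filling-in of detail rather than a change of method. Your closing remark correctly notes that strictness of the hyperbolicity comes from $\Delta(p_1)\neq 0$, which rules out repeated real roots of $F(t,a)$ itself.
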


 \begin{proof}
   Suppose first that $F$ is hyperbolic. By continuity, we may assume
   $F\in{\rm int}(\mathcal{H})$, which means that $F$ is strictly
   hyperbolic. It follows that $N_s(F)(t,a)$ has distinct real roots
   in $t$ for all $a\in\R^n\setminus\{0\}$, hence $\Delta_N(F)(s,a)>0$
   for all $s\in [0,1]$. Since $\Delta_N(F)(0,a)>0$ for all
   $a\in\R^n\setminus\{0\}$, we conclude that  $\Delta_N(F)(s,a)\ge 0$ holds for all
   $s\in [0,1]$.

   If $F$ is not hyperbolic, then $F(t,a)$ has a non-real root for
   some $a\in\R^n\setminus\{0\}$. Since $N_0(F)(t,a)$ has distinct
   real roots, it follows that $\Delta_N(F)(s,a)$
   must vanish for some $s\in (0,1]$.
 \end{proof}

In the case of curves ($n=2$), the positivity condition on the hyperbolicity discriminant can be related to a beautiful result due to Marshall:

\begin{Thm}[\cite{Ma10}]\label{Thm:Marshall} A polynomial $h\in\R[s,t]$ satisfies $h(a,b)\ge 0$ for all $a\in [0,1]$ and $b\in\R$ if and only if there exist sums of squares $\sigma_1,\sigma_2\in\R[s,t]$ such that
	\[
	h = \sigma_1+\sigma_2\cdot s(1-s).
	\]
\end{Thm}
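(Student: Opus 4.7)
The easy direction is immediate: if $h=\sigma_1+s(1-s)\sigma_2$ with $\sigma_1,\sigma_2$ sums of squares in $\R[s,t]$, then at any $(a,b)\in[0,1]\times\R$ both $\sigma_1(a,b)$ and $a(1-a)\sigma_2(a,b)$ are nonnegative, so $h(a,b)\ge 0$. Observe also that $\deg_t h$ must be even, otherwise $h(a,t)\to-\infty$ as $t\to\pm\infty$ for some $a\in[0,1]$.

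The converse is a Positivstellensatz on the non-compact set $S=[0,1]\times\R$. Since $S$ is unbounded in $t$, the quadratic module generated by $s(1-s)$ in $\R[s,t]$ is not archimedean and the compact Positivstellens\"atze of Schm\"udgen and Putinar do not apply directly; any proof must exploit the cylindrical structure of $S$. My plan is to compactify the $t$-direction by partial homogenization and reduce to Schm\"udgen's theorem on a compact base. With $d=\deg_t h$, introduce a new variable $u$ and set $\wt h(s,t,u)=u^{d}h(s,t/u)\in\R[s,t,u]$, so that $\wt h(s,t,1)=h(s,t)$ and $\wt h\ge 0$ on the compact set $K=[0,1]\times\{t^2+u^2=1\}$ (using continuity at $u=0$ and evenness of $d$). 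After perturbing to $\wt h_\varepsilon=\wt h+\varepsilon>0$ on $K$, Schm\"udgen's theorem in the coordinate ring of $K$ yields
\begin{equation*}
\wt h_\varepsilon\ \equiv\ \rho_0+s\rho_1+(1-s)\rho_2+s(1-s)\rho_3\pmod{t^2+u^2-1}
\end{equation*}
with $\rho_i$ sums of squares. The elementary identities $s=s^2+s(1-s)$ and $1-s=(1-s)^2+s(1-s)$ then absorb the linear-in-$s$ terms, leaving $\wt h_\varepsilon\equiv\tau_1+s(1-s)\tau_2$ in the quotient, with $\tau_1,\tau_2$ SOS.

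The main obstacle is converting this congruence modulo $(t^2+u^2-1)$ into an identity in $\R[s,t]$ after setting $u=1$, and then passing to the limit $\varepsilon\to 0$. The ideal remainder at $u=1$ is $t^2\cdot q(s,t,1)$ for some polynomial $q$ that is not \emph{a priori} a sum of squares, so this remainder must be absorbed (for example by iterating the argument on $q$, or by a careful choice of representative in the quotient class). In addition, taking $\varepsilon\to 0$ requires a uniform degree bound on the certificates, which in turn calls for either an effective version of Schm\"udgen's theorem on $K$ or a compactness argument on the truncated cone of representations of bounded degree. This combined degree-bookkeeping and remainder-cleanup is where I expect most of the work to lie.
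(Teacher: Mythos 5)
The paper does not prove this theorem: it cites Marshall's 2010 article \cite{Ma10} and explicitly remarks immediately afterwards that ``the proof of Marshall's theorem is quite intricate,'' so there is no in-text argument to compare yours against.

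Your sketch is admirably honest about its gaps, but the gaps you flag are precisely where the theorem is hard, and at least one of them cannot be closed along the route you propose. First, passing from a congruence modulo $(t^2+u^2-1)$ to an identity in $\R[s,t]$ is more delicate than ``ideal remainder cleanup'': the natural move is to substitute $(T,U)=(t,1)/\sqrt{1+t^2}$ (which kills the ideal generator exactly) and clear denominators, but this yields a certificate for $(1+t^2)^N h$ for some large $N$, not for $h$ itself, and removing the factor $(1+t^2)^N$ is itself a nontrivial ``denominator-free'' Positivstellensatz question, essentially of the same difficulty as the original statement. Second, and more decisively, letting $\varepsilon\to 0$ would require a degree bound on the Schm\"udgen certificates for $\wt h_\varepsilon$ that is uniform in $\varepsilon$. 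No such bound exists in general: the paper itself notes, in the sentence after the theorem, that the degrees of $\sigma_1,\sigma_2$ in Marshall's representation cannot be bounded in terms of $\deg h$ alone; and since $\wt h$ typically vanishes somewhere on the compact set $K$ (for instance on the circle at infinity $u=0$ whenever the leading coefficient of $h$ in $t$ has a zero in $[0,1]$), the known Schm\"udgen degree estimates for $\wt h_\varepsilon$ blow up as $\min_K \wt h_\varepsilon\to 0$. This rules out the ``truncated cone plus compactness'' limit you suggest. Marshall's actual argument does not compactify the $t$-direction at all; it first reduces to the case where the leading coefficient of $h$ in $t$ is strictly positive on $[0,1]$, then treats the degenerate cases by a careful local analysis near the zero set of $h$ on the strip together with Scheiderer-type local--global results for sums of squares, and this local structure is what controls the (unbounded, $h$-dependent) degrees in the final certificate.
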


The proof of Marshall's theorem is quite intricate. Unfortunately, the degree of the sums of squares $\sigma_1$ and $\sigma_2$ cannot be bounded in terms of the degree of $h$ alone. Therefore, Thm.~\ref{Thm:Marshall} does not translate into a criterion that can be checked by a single semidefinite program. Nevertheless, an SDP hierarchy of growing degrees can be employed.
The analogue of Thm.~\ref{Thm:Marshall} does not hold if more than one variable is unbounded. We refer to Marshall's book \cite{Ma08} for a broader discussion.  

\begin{Cor}\label{Cor:NpathMarshall} If a form $F\in\R[t,x_1,x_2]$ is hyperbolic, then there exist sums of squares $\sigma_1,\sigma_2\in\R[s,y]$ such that 
\[
\Delta_{N}(F)(s,y,1)=\sigma_1+\sigma_2 s(1-s).
\]
\end{Cor}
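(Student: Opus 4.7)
The plan is that this corollary should follow almost immediately by combining Thm.~\ref{Thm:NpathHyp} with Marshall's theorem (Thm.~\ref{Thm:Marshall}), once one dehomogenizes in the $x_2$ variable.

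First I would apply Thm.~\ref{Thm:NpathHyp} to the given hyperbolic form $F\in\R[t,x_1,x_2]$. This yields the inequality
\[
  \Delta_{N}(F)(s,a_1,a_2)\ \ge\ 0\quad\text{for all }s\in[0,1]\text{ and }(a_1,a_2)\in\R^{2}.
\]
Specializing $a_2=1$ and renaming the remaining affine variable as $y$, I would then define
\[
  h(s,y)\ :=\ \Delta_{N}(F)(s,y,1)\ \in\ \R[s,y],
\]
which by the inequality above satisfies $h(s,y)\ge 0$ for all $s\in[0,1]$ and all $y\in\R$ (the case where $s$ is bounded to an interval and the other variable ranges over all of $\R$ is exactly the setting of Thm.~\ref{Thm:Marshall}).

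Next I would invoke Marshall's theorem directly on $h$, with the variable $t$ of that theorem playing the role of $y$. This yields sums of squares $\sigma_1,\sigma_2\in\R[s,y]$ such that
\[
  h(s,y)\ =\ \sigma_1+\sigma_2\cdot s(1-s),
\]
which is exactly the asserted representation of $\Delta_{N}(F)(s,y,1)$.

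There is really no obstacle here: the content sits entirely in Thm.~\ref{Thm:NpathHyp} and Thm.~\ref{Thm:Marshall}, and the only thing to verify is that dehomogenization preserves the nonnegativity hypothesis needed to feed into Marshall's theorem, which is immediate since the original inequality holds pointwise for every $(a_1,a_2)\in\R^2$ and in particular along the slice $a_2=1$.
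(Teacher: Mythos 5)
Your proof is correct and follows essentially the same route as the paper: apply Thm.~\ref{Thm:NpathHyp} to get nonnegativity of $\Delta_N(F)$ on $[0,1]\times\R^2$, restrict to the slice $x_2=1$, and invoke Marshall's Thm.~\ref{Thm:Marshall}. The paper's proof phrases the restriction step in terms of homogeneity of $\Delta_N(F)$ in $x_1,x_2$, but as you observe, nonnegativity passes to the slice $x_2=1$ trivially, so the arguments coincide in substance.
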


\begin{proof}
  The polynomial $\Delta_{N}(F)(s,x_1,x_2)$ is homogeneous in
  $x_1,x_2$, hence, if it is non-negative for $s\in [0,1]$, then so is
  the dehomogenization $\Delta_N(F)(s,y,1)$. 
\end{proof}

\begin{Example}
  The hyperbolic cubic
  \[
        F=t^3-\frac{t^2 x_1}{2}-t x_1^2-\frac{t x_2^2}{2}+\frac{x_1^3}{2}
  \]
  has the N-path discriminant\\[-1.5em]

  {\scriptsize  \begin{align*}
             \Delta_{N}(F)\ =\ &
\frac{29469 s^6 x_1^6}{4}+\frac{51283}{2} s^6 x_1^4 x_2^2-3316 s^6 x_1^3
                             x_2^3+\frac{392497}{16} s^6 x_1^2 x_2^4-36 s^6
                                 x_1 x_2^5+\frac{12169 s^6 x_2^6}{2}\\
                    &-39350 s^5
                             x_1^6-143390 s^5 x_1^4 x_2^2+20316 s^5 x_1^3
               x_2^3-139200 s^5 x_1^2 x_2^4+108 s^5 x_1
                      x_2^5-34632 s^5 x_2^6\\
                    &+89581 s^4 x_1^6+338905
                             s^4 x_1^4 x_2^2-51420 s^4 x_1^3 x_2^3+332832 s^4
                             x_1^2 x_2^4-108 s^4 x_1 x_2^5+82980 s^4
                      x_2^6\\
                    &-111308 s^3 x_1^6-433116 s^3 x_1^4
                             x_2^2+68980 s^3 x_1^3 x_2^3-429120 s^3 x_1^2
                      x_2^4+36 s^3 x_1 x_2^5-107136 s^3 x_2^6\\
                    &+79632 s^2
                             x_1^6+315648 s^2 x_1^4 x_2^2-51840 s^2 x_1^3
                             x_2^3+314640 s^2 x_1^2 x_2^4+78624 s^2
                      x_2^6-31104 s x_1^6\\
                    &-124416 s x_1^4 x_2^2+20736 s
                             x_1^3 x_2^3-124416 s x_1^2 x_2^4-31104 s x_2^6+5184
                      x_1^6+20736 x_1^4 x_2^2\\
                    &-3456 x_1^3 x_2^3+20736 x_1^2
                             x_2^4+5184 x_2^6.
           \end{align*}
         }

         \vspace*{-1em}
         \noindent It can be verified numerically that $\Delta_N(F)$ is indeed
         nonnegative for $0\le s\le 1$, but we could not derive a nice
         rational representation of the form in Cor.~\ref{Cor:NpathMarshall}.
\end{Example}

\begin{bibdiv}
	\begin{biblist}

		\bib{BCR}{book}{
		   author={Bochnak, Jacek},
		   author={Coste, Michel},
		   author={Roy, Marie-Fran\c{c}oise},
		   title={Real algebraic geometry},
		   series={Ergebnisse der Mathematik und ihrer Grenzgebiete (3)},
		   volume={36},
		   note={Translated from the 1987 French original;
		   Revised by the authors},
		   publisher={Springer-Verlag, Berlin},
		   date={1998},
		   pages={x+430},
		   isbn={3-540-64663-9},
		   review={\MR{1659509}},
		   doi={10.1007/978-3-662-03718-8},
		}

\bib{papriquadratic}{article}{
  title={A complete characterization of determinantal quadratic polynomials},
  author={Dey, Papri}
  author={Pillai, Harish K},
  journal={Linear Algebra and its Applications},
  volume={543},
  pages={106--124},
  year={2018},
  publisher={Elsevier}
}
\bib{papribiv}{article}{
  title={Characterization of Determinantal Bivariate Polynomials},
  author={Dey, Papri},
  journal={https://arxiv.org/abs/1708.09559},
}

\bib{paprimulti}{article}{
  title={Determinantal Multivariate Polynomials},
  author={Dey, Papri},
  journal={https://arxiv.org/abs/1708.09557},
}

\bib{Gelfand}{book}{
    author = {Gelfand, Israel M.}
    author={Kapranov, Mikhail M.}
    author={Zelevinsky, Andrej V.},
    title =  {Discriminants, Resultants and Multidimensional Determinants},
    publisher = {Mathematics: Theory and applications},
    Year = {1994},
}

\bib{Gondard}{article}{
  title={Le 17e probleme de Hilbert pour les matrices},
  author={Gondard, Danielle}
  author={Ribenboim, Paulo},
  journal={Bull. Sci. Math},
  volume={2},
  number={98},
  pages={1},
  year={1974}
}

\bib{Macaulay2}{article}{
  author = {Grayson, Daniel R.},
  author= {Stillman, Michael E.},
 title = {Macaulay2, a software system for research in algebraic
   geometry},
  journal = {Available at \url{http://www.math.uiuc.edu/Macaulay2/}},
}

\bib{Vinnikov2}{article}{
  title={Stable and real-zero polynomials in two variables},
  author={Grinshpan, Anatolii},
  author={Kaliuzhnyi-Verbovetskyi, Dmitry S.},
  author={Vinnikov, Victor},
  author={Woerdeman, Hugo J},
  journal={Multidimensional Systems and Signal Processing},
  volume={27},
  number={1},
  pages={1--26},
  year={2014},
  publisher={Springer}
}

		\bib{Hen10}{article}{
		   author={Henrion, Didier},
		   title={Detecting rigid convexity of bivariate polynomials},
		   journal={Linear Algebra Appl.},
		   volume={432},
		   date={2010},
		   number={5},
		   pages={1218--1233},
		   issn={0024-3795},
		   review={\MR{2577621}},
		}
		\bib{HV07}{article}{
		   author={Helton, J. William},
		   author={Vinnikov, Victor},
		   title={Linear matrix inequality representation of sets},
		   journal={Comm. Pure Appl. Math.},
		   volume={60},
		   date={2007},
		   number={5},
		   pages={654--674},
		   issn={0010-3640},
		   review={\MR{2292953}},
		}

\bib{Hormander}{book}{
   author={H\"{o}rmander, Lars},
   title={Linear partial differential operators},
   series={Die Grundlehren der mathematischen Wissenschaften, Bd. 116},
   publisher={Academic Press, Inc., Publishers, New York; Springer-Verlag,
   Berlin-G\"{o}ttingen-Heidelberg},
   date={1963},
   pages={vii+287},
   review={\MR{0161012}},
}		   

\bib{Krein}{article}{
  title={The method of symmetric and Hermitian forms in the theory of the separation of the roots of algebraic equations},
  author={Krein, Mark G.},
  author={Naimark, Mark A.},
  journal={Linear and multilinear algebra},
  volume={10},
  number={4},
  pages={265--308},
  year={1981},
  publisher={Taylor \& Francis}
}

		\bib{LR12}{article}{
		   author={Laurent, Monique},
		   author={Rostalski, Philipp},
		   title={The approach of moments for polynomial equations},
		   conference={
		      title={Handbook on semidefinite, conic and polynomial optimization},
		   },
		   book={
		      series={Internat. Ser. Oper. Res. Management Sci.},
		      volume={166},
		      publisher={Springer, New York},
		   },
		   date={2012},
		   pages={25--60},
		   review={\MR{2894690}},
		}
		\bib{PL17}{article}{
		   author={Leykin, Anton},
		   author={Plaumann, Daniel},
		   title={Determinantal representations of hyperbolic curves via polynomial
		   homotopy continuation},
		   journal={Math. Comp.},
		   volume={86},
		   date={2017},
		   number={308},
		   pages={2877--2888},
		   issn={0025-5718},
		   review={\MR{3667028}},
		}
		\bib{Ma08}{book}{
		   author={Marshall, Murray},
		   title={Positive polynomials and sums of squares},
		   series={Mathematical Surveys and Monographs},
		   volume={146},
		   publisher={American Mathematical Society, Providence, RI},
		   date={2008},
		   pages={xii+187},
		   isbn={978-0-8218-4402-1},
		   isbn={0-8218-4402-4},
		   review={\MR{2383959}},
		}
		\bib{Ma10}{article}{
		   author={Marshall, Murray},
		   title={Polynomials non-negative on a strip},
		   journal={Proc. Amer. Math. Soc.},
		   volume={138},
		   date={2010},
		   number={5},
		   pages={1559--1567},
		   issn={0002-9939},
		   review={\MR{2587439}},
		}
		\bib{NPT13}{article}{
		   author={Netzer, Tim},
		   author={Plaumann, Daniel},
		   author={Thom, Andreas},
		   title={Determinantal representations and the Hermite matrix},
		   journal={Michigan Math. J.},
		   volume={62},
		   date={2013},
		   number={2},
		   pages={407--420},
		   issn={0026-2285},
		   review={\MR{3079270}},
		}
		\bib{NT12}{article}{
		   author={Netzer, Tim},
		   author={Thom, Andreas},
		   title={Polynomials with and without determinantal representations},
		   journal={Linear Algebra Appl.},
		   volume={437},
		   date={2012},
		   number={7},
		   pages={1579--1595},
		   issn={0024-3795},
		   review={\MR{2946343}},
		}
		\bib{Nui68}{article}{
		   author={Nuij, Wim},
		   title={A note on hyperbolic polynomials},
		   journal={Math. Scand.},
		   volume={23},
		   date={1968},
		   pages={69--72 (1969)},
		   issn={0025-5521},
		   review={\MR{0250128}},
		}

\bib{Parrilososcomp}{article}{
  title={A Macaulay 2 package for computing sum of squares decompositions of polynomials with rational coefficients},
  author={Peyrl, Helfried},
  author={Parrilo, Pablo A},
  journal={Proceedings of the 2007 international workshop on
    Symbolic-numeric computation (J.~Verschelde and S.~M.~Watts, Eds.)},
  pages={207--208},
  year={2007},
}

		\bib{PSV12}{article}{
		   author={Plaumann, Daniel},
		   author={Sturmfels, Bernd},
		   author={Vinzant, Cynthia},
		   title={Computing linear matrix representations of Helton-Vinnikov curves},
		   conference={
		      title={Mathematical methods in systems, optimization, and control},
		   },
		   book={
		      series={Oper. Theory Adv. Appl.},
		      volume={222},
		      publisher={Birkh\"auser/Springer Basel AG, Basel},
		   },
		   date={2012},
		   pages={259--277},
		   review={\MR{2962788}},
		   
		   }

\bib{Nikhil}{article}{
  title={Real Stability Testing},
  author={Raghavendra, Prasad}
  author={Ryder, Nick}
  author={Srivastava, Nikhil}
  journal={arXiv preprint arXiv:1610.00209},
  year={2016},
 }

\bib{Vinnikov}{article}{
       author = {Victor Vinnikov}, 
       title = {LMI Representations of Convex Semialgebraic Sets and Determinantal Representations of 
                     Algebraic Hypersurfaces: Past, Present, and Future},
       journal = {Operator Theory: Advances and Applications},
       year = {2012},
       volume = {222},
       pages = {325-348}
}

\end{biblist}
\end{bibdiv}

\end{document}